\DeclareMathOperator{\Aut}{Aut} \DeclareMathOperator{\rep}{rep}
\DeclareMathOperator{\Rep}{Rep}
  \DeclareMathOperator{\Sp}{Sp} \DeclareMathOperator{\PSp}{PSp}
\DeclareMathOperator{\supp}{supp}
\DeclareMathOperator{\fix}{fix}
 \DeclareMathOperator{\GL}{GL}
\DeclareMathOperator{\Sym}{Sym} \DeclareMathOperator{\n}{{\bf{v}}}
\newtheorem{theorem}{Theorem}[section]
\newtheorem{proposition}[theorem]{Proposition}
\newtheorem{lemma}[theorem]{Lemma}
\theoremstyle{definition}
\newtheorem{remark}[theorem]{Remark}
\newcommand{\F}{\mathbb F}
\renewcommand{\leq}{\leqslant}
\renewcommand{\geq}{\geqslant}
\numberwithin{equation}{section}
\begin{document}

\title[Increasing the minimum distance of codes by twisting]
      {Increasing the minimum distance of codes by twisting}
\author{Marzieh Akbari, Neil I. Gillespie and Cheryl E. Praeger}
\address{[Akbari] Faculty of Mathematics,
     K. N. Toosi University of Technology,
     Tehran, IRAN.}
\email{m.akbari@deni.kntu.ac.ir}
\address{[Gillespie] Heilbronn Institute for Mathematical Research, School of Mathematics,
Howard House, The University of Bristol, Bristol, BS8 1SN, United Kingdom.}
\email{neil.gillespie@bristol.ac.uk}
\address{[Praeger] Centre for the Mathematics of Symmetry and Computation
School of Mathematics and Statistics
The University of Western Australia,
35 Stirling Highway, Crawley, Western Australia, 6009.
Also affiliated with King Abdulaziz University, Jeddah, Saudi Arabia.}
\email{cheryl.praeger@uwa.edu.au}     

\thanks{
{\it Key words and phrases: powerline communication, constant
  composition codes, frequency permutation arrays, permutation codes,
  twisted permutation codes}}

\begin{abstract}
Twisted permutation codes, introduced recently by the second and
third authors, are frequency permutation arrays. They are similar
to repetition permutation codes, in that they are obtained by a
repetition construction applied to a smaller code. It was previously shown
that the minimum distance of a twisted permutation code is at
least the minimum distance of a corresponding repetition
permutation code, but in some instances can be larger. We
construct two new infinite families of twisted permutation codes with
minimum distances strictly greater than those for the
corresponding repetition permutation codes.
\end{abstract}

\maketitle

\section{Introduction}
\noindent Powerline communication has been proposed as a solution
to ``{\em the last mile problem}'' in the delivery of reliable
telecommunications at the lowest cost \cite{han vinck, state of
the art}. A \emph{constant composition code} of length $m$ over
an alphabet $Q$ of size $q$ has the property that each codeword
has $p_i$ occurrences of the $i^{th}$ letter of the alphabet,
where the $p_i$ are positive integers such that $\sum{p_i}=m$.
Such codes have been proposed as suitable coding schemes to solve
the narrow band and impulse noise problems associated with
powerline communication \cite{dukes des, dukes dis}.

One approach is to choose $p_i=1$ for all $i$, and in this case each
codeword is a permutation on $m$ letters. An \emph{$(m, d)$
permutation array}, usually denoted by $PA(m, d)$, is a set of
permutations on $m$ letters with the property that the Hamming
distance between any two distinct permutations in this set is at
least $d$.
A larger family of constant composition codes, which properly
contains the permutation arrays, consists of those in which
each letter occurs $p$ times, where $p=m/q$. These codes are
called \emph {frequency permutation arrays} (FPA), and have
been studied, for example, in \cite{dukes des, huc
des, huc mullen}. FPA's also play an important role in the study of \emph{neighbour transitive codes}, introduced by 
the second and third authors \cite{ntrans}. In particular they arise naturally in certain classifications of these codes \cite{diagntrans,asntrans}.

Another practical application of FPA's includes the area of multilevel flash
memories. Flash memory is an electronic non-volatile memory that
uses floating-gate cells to store information \cite{capp}. In
flash memories, cells are organized into blocks, where each block
has a large number $(\approx 10^5)$ of cells \cite{capp}. Given a
set of $n$ cells with distinct charge levels, the rank of a cell
indicates the relative position of its own charge level, and so the
ranks of the $n$ cells induce a permutation of $\{1, 2, \ldots ,
n\}$. Schwartz et al. \cite{jiang,tamo} studied error-correcting codes
for such permutations under the infinity norm, motivated by a novel
storage scheme for flash memories called rank modulation (which uses these permutations).
%
%
As for other applications of flash memory: Shieh and Tasi applied FPAs
to provide multilevel flash memory with error correcting
capabilities, and because of their efficient encoding and decoding
algorithms, FPAs can be used in flash memory systems to represent
information and correct errors caused by charge level fluctuation
\cite{shieh}.
%

\emph{Twisted permutation codes} are examples of frequency
permutation arrays with potentially good minimum distance properties (see
\cite{twisted}). These codes are generated by groups and are
generalisations of repeated permutation codes. In \cite{twisted}, the
second and third authors, with Spiga,  proved that the minimum distance of a
twisted permutation code is at least the minimum distance of a
corresponding repetition permutation code for the group.
Moreover, they gave examples for which the minimum distance
is greater than that achieved by the
repetition construction (see \cite[Table 1]{twisted}).

In this paper, we give two new infinite families of twisted permutation
codes with minimum distance strictly greater than the lower
bound given by the usual repetition construction.
We hope that the ideas behind these constructions might suggest
further new constructions with equal or better improvements in minimum distance.
\emph{We would be interested to know how much improvement in minimum distance is
possible:  can this be quantified?}

Let $T$ be an abstract group, let $\mathcal{I}$ be an ordered
$r$-tuple of permutation representations of $T$ on the set $\{1,
\ldots , q\}$ (with repeats allowed), and let $\rho$ be (any) one of these representations. In
Section \ref{construction} we define the twisted permutation code
$C(T,\mathcal{I})$ and the repetition code $\Rep_r(C(T,\rho))$.
Both are frequency permutation arrays of length $rq$ over the
alphabet $\{1, \ldots, q\}$ with each letter occurring $r$ times
in each codeword. Let $\delta_{tw}$ denote the minimum
distance of $C(T,\mathcal{I})$, and let $\delta_{rep}$ be the minimum
of the minimum distance of $\Rep_r(C(T,\rho))$. We prove the
following.
\begin{theorem}\label{main thm}
The twisted permutation codes described in Table~$\ref{T}$ have
minimum distance $\delta_{tw}$ strictly greater than
$\delta_{rep}$.
\end{theorem}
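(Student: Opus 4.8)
The plan is to handle each of the two infinite families in Table~\ref{T} separately, and in each case to (i) compute $\delta_{rep}$ exactly and (ii) establish a lower bound for $\delta_{tw}$ that strictly exceeds it, together with a pair of codewords attaining the bound. Recall from Section~\ref{construction} that, for a faithful choice of data, $\delta_{rep} = r\cdot\min_{\rho\in\mathcal{I}}\mindeg(T^\rho)$, where $\mindeg$ denotes the minimal degree of a permutation group (the least number of points moved by a non-identity element); for the small groups populating Table~\ref{T} this quantity is read off directly, so essentially all the work is on the side of $C(T,\mathcal{I})$.

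First I would set up the combinatorial description of Hamming distance in $C(T,\mathcal{I})$ coming from Section~\ref{construction}: the distance between the codewords indexed by $s$ and $t$ is a sum $\sum_{i=1}^{r} m_i$, where $m_i$ is the number of points moved by a group element $\alpha_i\in T$ acting through the $i$-th representation $\rho_i$, and where $\alpha_1,\dots,\alpha_r$ are determined by $s$, $t$ and the twisting data; in the untwisted case $\alpha_1=\cdots=\alpha_r=s^{-1}t$, the sum collapses to $r$ times the number of points moved by $s^{-1}t$, and its minimum over $s\neq t$ and over $\rho\in\mathcal{I}$ is $\delta_{rep}$. The bound $\delta_{tw}\geq\delta_{rep}$ of \cite{twisted} says this sum is always at least $r\cdot\min_i\mindeg(\rho_i)$; what must be shown for the families in Table~\ref{T} is that equality is impossible. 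Concretely, if the $i$-th contribution equals $\mindeg(\rho_i)$ then $\alpha_i$ lies in a very restricted set of small-support elements of $T$, and one checks that the compatibility (cocycle-type) relations linking $\alpha_1,\dots,\alpha_r$ force at least one other $\alpha_j$ to fall outside the corresponding small-support set, hence to contribute strictly more than $\mindeg(\rho_j)$. Summing the $m_i$ then gives a total strictly greater than $r\cdot\min_i\mindeg(\rho_i)=\delta_{rep}$.

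With a strict lower bound in hand, for each family I would then exhibit $s$ and $t$ (equivalently, a specific non-trivial ``difference'' driving $\alpha_1,\dots,\alpha_r$) realising that bound, so that $\delta_{tw}$ is pinned down exactly and the strict inequality $\delta_{tw}>\delta_{rep}$ is confirmed. This last step is a finite computation inside the small group $T$, carried out uniformly in the family parameter (and easily verified by computer for small members).

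The main obstacle is the middle step: ruling out the equality $\delta_{tw}=\delta_{rep}$ \emph{uniformly} along an infinite family. Excluding a single cancellation is trivial from $\delta_{tw}\geq\delta_{rep}$ alone; the difficulty is that both the small-support elements of the representations $\rho_i$ and the precise way the twist couples the $r$ blocks vary with the parameter of the family, so the analysis of which configurations of $(\alpha_1,\dots,\alpha_r)$ could conceivably meet the repetition bound must be organised parametrically rather than group-by-group. Choosing the twisting data in Table~\ref{T} so that this case analysis remains finite and uniform is precisely what makes the two families work, and is where the construction has to be designed with care.
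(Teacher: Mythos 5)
Your outline matches the shape of the paper's argument: compute $\delta_{\rep}$ as $r$ times the common minimal degree, then argue that for every nonidentity $g$ the twist prevents all $r$ terms of $\sum_{\rho\in\mathcal{I}}|\supp(\rho(g))|$ from simultaneously equalling that minimal degree, and finally exhibit an element meeting the improved bound. This is exactly the architecture of Propositions \ref{T affine} and \ref{symplectic}, to which the paper's proof of Theorem \ref{main thm} delegates. But as written the proposal has a genuine gap, because the entire mathematical content of the theorem is the step you defer: designing the twisting data $\mathcal{I}$ and actually verifying the forced increase in support. You acknowledge this yourself (``is where the construction has to be designed with care'') but offer no mechanism for doing it, so what remains is a restatement of the goal rather than a proof.

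Two of your framings would also mislead the execution. The coupling between the $r$ blocks is not via ``cocycle-type relations'': by Proposition \ref{lower bound}(iii), the distance is $\sum_i|\supp(\rho_i(g))|$ with a single $g=s^{-1}t$ fed through each $\rho_i$, and the $\rho_i$ differ only by precomposition with automorphisms of $T$ (the family $\tau_{{\bf w}_r}\tau_{{\bf w}_0}^{-1}$, $r=0,\dots,p-1$, for $\overline{G}_k$, and $\{\iota,\tau\}$ for $\Sp(4,2^n)$); the only constraint among the $\alpha_i=\rho_i(g)$ is that they are images of one $g$. And these groups are not ``small,'' nor is the verification a ``finite computation'': one needs parametric structural results. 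For the affine family this means the explicit forms of $B_k^i$ and $\Omega(k,i)$ and the fixed-point analysis of Lemma \ref{fixpoint}, which shows that when $i\not\equiv 0\pmod p$ exactly one of the $p$ twisted images of $g$ has $p$ fixed points and the other $p-1$ have none, so the support sum is $p^{k+1}-p$, strictly above $\delta_{\rep}=p(p^k-p)=p^{k+1}-p^2$. For the symplectic family it means Witt's theorem together with the case analysis of Lemma \ref{transvection} (a transvection fixes $q^2+q+1$ one-spaces, every other nonidentity element at most $2q+2$) and Lemma \ref{l:trans} (Taylor's outer automorphism $\tau$ maps a transvection to an element with only $q+1$ fixed one-spaces), giving $\delta_{tw}=2q^3+q^2>2q^3=\delta_{\rep}$. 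Those lemmas, not the general strategy, carry the theorem, and the proposal supplies none of them.
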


\begin{table}[h]
\centering
\begin{tabular}{c|c|c|c|c|c}
\hline
$T$ & $r$ & $q$ & $\delta_{tw}$ & $\delta_{tw}-\delta_{\rep}$ & Ref. \\
\hline
$\overline{G_k}$ & $p$ & $p^k$ & $p^{k+1}-p$ & $p^2-p$ & Sec. \ref{sec affine} (Prop. 3.7)\\
$\Sp(4, 2^n)$ & $2$ & $2^{3n}+2^{2n}+2^n+1$ &
$2^{3n+1}+2^{2n
}$ & $2^{2n}$ & Sec. \ref{sec symplectic} (Prop. 4.6)\\
\hline
\end{tabular}\\[0.2cm]
\caption{Twisted Permutation Codes with
$\delta_{tw}>
\delta_{\rep}$.}\label{T}
\end{table}

\section{Definitions and Preliminaries}

\subsection{Codes in Hamming Graphs.} For positive integers
$m, q$, each at least $2$, the Hamming graph $\Gamma=H(m, q)$
is the graph whose vertex set $V(\Gamma)$ is the set of
$m$-tuples with entries from an `alphabet' $Q$ of size
$q$, such that two vertices form an edge if and only if
they differ in precisely one entry.
A code $C$ of length $m$ over $Q$ is a subset of $V(\Gamma)$.

The automorphism group of $\Gamma$, which we denote by
$\Aut(\Gamma)$, is the semi-direct product $B\rtimes L$ where
$B\cong S_q^m$ and $L\cong S_m$, see \cite[Theorem
9.2.1]{brouwer}. Its action is described as follows:
for $g=(g_1,\ldots, g_m)\in B$, $\sigma\in L$
and $\alpha=(\alpha_1,\ldots,\alpha_m)\in V(\Gamma)$,
\begin{align*}
\alpha^g=(\alpha_1^{g_1}, \ldots, \alpha_m^{g_m}), \ \ \
\alpha^\sigma=(\alpha_{1^{\sigma^{-1}}}, \ldots,
\alpha_{m^{\sigma^{-1}}}).
\end{align*}
For all pairs of vertices $\alpha,\beta\in V(\Gamma)$, the \emph{Hamming
  distance} between $\alpha$ and $\beta$, denoted by
$d(\alpha,\beta)$, is defined to be the number of entries in
which the two vertices differ. It is the distance between
$\alpha$ and $\beta$ in $\Gamma$, and so we usually refer
simply to distance, rather than Hamming distance.
%
The \emph{minimum distance, $\delta(C)$,} of a code $C$ is the
smallest distance between distinct codewords of $C$, that is,
$$
\delta(C):={\rm min}\{d(\alpha, \beta) | \alpha, \beta\in C, \alpha\neq\beta\}.
$$
If $C$ consists of exactly one codeword, then we set
$\delta(C)=0$.
A code $C$ is called \emph{distance invariant} if, for all positive integers $i$,
the number of codewords at distance $i$ from a codeword $\alpha\in C$
is independent of the choice of $\alpha$.

\subsection{Permutation Groups} Let $\Omega$ be an arbitrary non-empty set.
We denote by $\Sym(\Omega)$ the group of all permutations of
$\Omega$, called the symmetric group on $\Omega$.  A
\emph{permutation group} on $\Omega$ is a subgroup of
$\Sym(\Omega)$. For $t\in\Sym(\Omega)$ and $\alpha\in\Omega$,
we denote by $\alpha^t$ the image of $\alpha$ under $t$.
Suppose that $G$ is a permutation group on $\Omega$
and $t\in G$. We define the \emph{support of $t$} by
$$
\supp(t)=\{\alpha\in\Omega\,:\,\alpha^t\neq\alpha\},
$$
and the set of \emph{fixed points of $t$} by
$$
\fix(t)=\{\alpha\in\Omega\,:\,\alpha^t=\alpha\}.
$$
Then $\Omega=\supp(t)\cup\fix(t)$ for all $t\in G$.
The minimum value $\min\{|\supp(t)|\,:\,1\neq t\in
G\}$ is called the \emph{minimal degree} of $G$.

Let $G_1$ and $G_2$ be two groups acting on the sets $\Omega_1$
and $\Omega_2$, respectively. Then the two actions are said to be
\emph{permutationally isomorphic} if there exists a bijection
$\lambda: \Omega_1 \rightarrow\Omega_2$ and an isomorphism
$\varphi: G_1 \rightarrow G_2$ such that
\begin{equation}
\lambda(\alpha^g)=\lambda(\alpha)^{g^\varphi} \ \ \textnormal{for
all $\alpha\in\Omega_1$ and $g\in G_1$}.
\end{equation}
The pair $(\lambda, \varphi)$ is called a \emph{permutational
isomorphism}.
\subsection{Construction}\label{construction}
Let $Q=\{1,\ldots,q\}$ and $H(q,q)$ be the Hamming graph of
length $q$ over $Q$. Let $T$ be an abstract group, and let
$\rho:T\longrightarrow S_q$ be a (permutation) representation of
$T$ on $Q$ given by $t\mapsto \rho(t)$. For $t\in T$, we identify
the permutation $\rho(t)$ with the vertex in $H(q,q)$ that
represents its passive form, that is,
$\alpha(t,\rho)=(1^{\rho(t)},\ldots,q^{\rho(t)})\in H(q,q)$. We define
\begin{equation}\label{1code}
C(T,\rho):=\{\alpha(t,\rho)\,:\,t\in T\}
\end{equation}
as a code in $H(q,q)$. Then $C(T,\rho)$ is a $(q,d)$-permutation array,
where $d$ is the minimal degree of $\rho(T)$:

\begin{lemma}\cite[Lemma 3.2]{twisted}\label{2.1lem}
For $t\in T$, the distance $d(\alpha(1_T, \rho), \alpha(t,
\rho))=|\supp(\rho(t))|$, and  $C(T, \rho)$ has minimum
distance $\delta(C(T,\rho))$ equal to the minimal degree
$\min\{|\supp(\rho(t))|\,:\,1\neq t\in
T\}$ of $\rho(T)$.
\end{lemma}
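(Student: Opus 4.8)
The plan is to prove the two assertions separately: the first is a direct unwinding of the definition of Hamming distance, and the second follows by reducing the computation of $\delta(C(T,\rho))$ to distances measured from the single codeword $\alpha(1_T,\rho)$, using a natural transitive group of automorphisms of $H(q,q)$ that preserves $C(T,\rho)$.

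First I would compute $d(\alpha(1_T,\rho),\alpha(t,\rho))$. Since $\rho(1_T)$ is the identity permutation, $\alpha(1_T,\rho)=(1,2,\dots,q)$, whereas the $i$-th entry of $\alpha(t,\rho)$ is $i^{\rho(t)}$. By definition the distance counts the coordinates $i\in Q$ in which the two $q$-tuples differ, that is, those $i$ with $i^{\rho(t)}\neq i$; this set is exactly $\supp(\rho(t))$, so $d(\alpha(1_T,\rho),\alpha(t,\rho))=|\supp(\rho(t))|$, which is the first claim.

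For the minimum distance I would observe that for each $g\in T$ the coordinatewise map $\beta=(\beta_1,\dots,\beta_q)\mapsto(\beta_1^{\rho(g)},\dots,\beta_q^{\rho(g)})$ is the automorphism $(\rho(g),\dots,\rho(g))\in B\leq\Aut(H(q,q))$, and it sends $\alpha(t,\rho)$ to $\alpha(tg,\rho)$ because $i^{\rho(tg)}=(i^{\rho(t)})^{\rho(g)}$. Hence these automorphisms preserve $C(T,\rho)$ and act transitively on it (taking $g=s^{-1}t$ carries $\alpha(s,\rho)$ to $\alpha(t,\rho)$), so $C(T,\rho)$ is distance invariant and $\delta(C(T,\rho))=\min\{d(\alpha(1_T,\rho),\alpha(t,\rho)) : \alpha(t,\rho)\neq\alpha(1_T,\rho)\}$. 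Now $\alpha(t,\rho)\neq\alpha(1_T,\rho)$ precisely when $\rho(t)\neq 1$, so by the first part this minimum is $\min\{|\supp(\rho(t))| : \rho(t)\neq 1\}$, i.e.\ the minimal degree of the permutation group $\rho(T)$. (Alternatively, one can verify directly that $d(\alpha(s,\rho),\alpha(t,\rho))=|\supp(\rho(st^{-1}))|$ by applying $\rho(t)^{-1}$ to each coordinate, and then let $u=st^{-1}$ range over $T$ to reach the same conclusion.)

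I do not expect any genuine obstacle here, since everything reduces to elementary bijective bookkeeping; the only point that needs a word of care is the non-faithful case, where the expression $\min\{|\supp(\rho(t))|:1\neq t\in T\}$ must be read as the minimum over those $t$ with $\rho(t)\neq 1$ — equivalently over the non-identity elements of $\rho(T)$ — to avoid a spurious contribution of $0$ from a kernel element. When $\rho$ is faithful the two readings coincide, and in any case the automorphism argument above picks out exactly the right set of elements.
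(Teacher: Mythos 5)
Your proof is correct: the first claim is the direct coordinate count you give, and the transitive action of the automorphisms $(\rho(g),\dots,\rho(g))\in B$ on the code legitimately reduces the computation of $\delta(C(T,\rho))$ to distances from the single codeword $\alpha(1_T,\rho)$. The paper states this lemma without proof, citing \cite{twisted}; your argument is the standard one, and your remark that in the non-faithful case the minimum must be taken over those $t$ with $\rho(t)\neq 1$ (equivalently over nontrivial elements of $\rho(T)$, as in the definition of minimal degree) is exactly the right reading of the statement.
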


Now we consider a general construction. Let
$\mathcal{I}=(\rho_1,\ldots,\rho_r)$ be an ordered list of
representations from $T$ to $S_q$ (with repetitions allowed) and define
$$
\alpha(t,\mathcal{I}):=(\alpha(t,\rho_1),\ldots,\alpha(t,\rho_r))\in
H(rq,q),
$$
an $r$-tuple of codewords of the form given
in (\ref{1code}).  Set
$$
C(T,\mathcal{I}):=\{\alpha(t,\mathcal{I})\,:\,t\in T\}.
$$
Then $C(T,\mathcal{I})$ is a code in $H(rq,q)$, and is called a
\emph{twisted permutation code}. In particular, if $r=1$
then  $C(T,\mathcal{I})$ is the permutation code  $C(T,\rho_1)$ given in
(\ref{1code}), and if $\rho_1=\cdots=\rho_r$ then
$C(T,\mathcal{I})$ is called the \emph{$r$-fold repetition code} for $T\rho_1$, denoted
$\Rep_r(C(T, \rho_1))$.

\begin{proposition}\label{lower bound}\cite{twisted}
With the notation as above, consider the code $C=C(T,\mathcal{I})$.
Then,
\begin{itemize}
\item[(i)] $\Aut(C)$ has a subgroup acting regularly on $C$; in particular,
$C$ is distance invariant;
\item[(ii)] $|C|=|T/K|$, where $K=\cap_{\rho\in\mathcal{I}}\ker\rho$;
\item[(iii)] $\delta(C)=\min_{t\in T^\#}\{\sum_{\rho\in\mathcal{I}}|\supp(\rho(t))|\}
\geq \min_{\rho\in\mathcal{I}}\{\delta(\Rep_r(C(T,\rho))\}$,
where $T^\#=T\backslash \{1\}$.
\end{itemize}
\end{proposition}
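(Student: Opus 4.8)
The plan is to establish the three parts essentially separately and in the order (ii), (i), (iii): part~(ii) is a short fibre count, part~(i) is where the only real construction is needed -- an explicit regular subgroup of $\Aut(C)$ -- and part~(iii) will then follow by decomposing a Hamming distance over the $r$ length-$q$ blocks of $\Gamma=H(rq,q)$ and invoking Lemma~\ref{2.1lem}.

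First I would dispose of (ii). For $s,t\in T$ one has $\alpha(s,\mathcal I)=\alpha(t,\mathcal I)$ if and only if $\rho(s)=\rho(t)$ for every $\rho\in\mathcal I$, i.e.\ $st^{-1}\in\ker\rho$ for every such $\rho$, i.e.\ $st^{-1}\in K$; hence $t\mapsto\alpha(t,\mathcal I)$ is constant on each right coset of $K$ and injective on the coset space, so $|C|=|T/K|$. Next, for (i), I would write down the regular subgroup by hand inside the group $B\cong S_q^{rq}$ of coordinatewise permutations (recall $\Aut(\Gamma)=B\rtimes L$). Index the coordinates of $\Gamma$ as $r$ consecutive blocks of length $q$, the $j$-th carrying $\rho_j$, and for $s\in T$ let $g_s\in B$ have every entry in the $j$-th block equal to $\rho_j(s)\in S_q$ (trivial $L$-part). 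The three things to check are: that $s\mapsto g_s$ is a homomorphism $T\to\Aut(\Gamma)$ with kernel exactly $K$ (coordinate by coordinate, from $\rho_j(s)\rho_j(s')=\rho_j(ss')$ and from $g_s=\id\iff\rho_j(s)=\id$ for all $j$); that $\alpha(t,\mathcal I)^{g_s}=\alpha(ts,\mathcal I)$, since in block $j$ the $i$-th entry goes from $i^{\rho_j(t)}$ to $\bigl(i^{\rho_j(t)}\bigr)^{\rho_j(s)}=i^{\rho_j(ts)}$; and consequently that $G:=\{g_s:s\in T\}\leq\Aut(\Gamma)$ has order $|T/K|$, stabilises $C$ setwise, and is transitive on $C$ (take $s=t^{-1}t'$ to move $\alpha(t,\mathcal I)$ to $\alpha(t',\mathcal I)$). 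A transitive group of order $|C|$ on $C$ is regular, so $G\leq\Aut(C)$ acts regularly on $C$. Distance invariance then drops out, since each $g_s$ preserves Hamming distance and maps $C$ onto $C$, so for every $c\in C$ and every $i$ the set $\{c'\in C:d(c,c')=i\}$ is carried bijectively onto $\{c'\in C:d(c^{g_s},c')=i\}$, and $G$ is transitive.

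With (i) proved, part (iii) goes as follows. By transitivity $\delta(C)$ is the minimum of $d(\alpha(1_T,\mathcal I),\alpha(t,\mathcal I))$ over all $t$ with $\alpha(t,\mathcal I)\neq\alpha(1_T,\mathcal I)$; because the $r$ blocks occupy disjoint coordinates this distance equals $\sum_{j=1}^r d(\alpha(1_T,\rho_j),\alpha(t,\rho_j))$, and each summand is $|\supp(\rho_j(t))|$ by Lemma~\ref{2.1lem}, which is the asserted formula. For the lower bound, the same block decomposition gives $\delta(\Rep_r(C(T,\rho)))=r\cdot\min\{|\supp(\rho(s))|:\rho(s)\neq\id\}$ -- that is, $r$ times the minimal degree of $\rho(T)$. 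Picking $t$ that attains $\delta(C)$, every $\rho\in\mathcal I$ with $\rho(t)\neq\id$ satisfies $|\supp(\rho(t))|\geq\min\{|\supp(\rho(s))|:\rho(s)\neq\id\}\geq\min_{\sigma\in\mathcal I}\tfrac1r\,\delta(\Rep_r(C(T,\sigma)))$, and summing these $r$ inequalities over $\rho\in\mathcal I$ yields $\delta(C)\geq\min_{\sigma\in\mathcal I}\delta(\Rep_r(C(T,\sigma)))=\delta_{\rep}$.

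The step I expect to be the main obstacle is (i): one has to commit to the explicit elements $g_s$ and then verify, with a little care, that they are automorphisms of $\Gamma$, that $s\mapsto g_s$ has kernel exactly $K$, and that $g_s$ acts as right translation by $s$ on the parametrisation of $C$; once that is in place, (ii) and (iii) are bookkeeping. The one point to watch in (iii) is the comparison with $\Rep_r$: the last step used that \emph{all} $r$ representations in $\mathcal I$ move the minimising $t$, so that each of the $r$ summands is at least a minimal degree; this is the situation for the two families of Table~\ref{T}, and in its absence one recovers only $\delta(C)\geq\tfrac1r\,\delta_{\rep}$.
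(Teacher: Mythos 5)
Your argument is correct, and it is essentially the argument of \cite{twisted}; the paper under review does not reprove this proposition but simply cites that source, so there is no in-paper proof to compare against. Parts (i) and (ii) are complete as written: the blockwise right-translations $g_s$ do lie in $B\leq\Aut(\Gamma)$, the kernel of $s\mapsto g_s$ is exactly $K$, and transitivity of a group of order $|C|$ forces regularity and distance invariance. The substantive point is the caveat you raise at the end of (iii), and you are right to raise it: for an arbitrary list $\mathcal I$ of representations with distinct kernels the final inequality can genuinely fail, not just degrade to $\delta(C)\geq\tfrac1r\delta_{\rep}$ in your proof. For example, take $T=\Z_2\times\Z_2=\langle a,b\rangle$, $q=4$, $\rho_1$ killing $a$ and sending $b$ to $(1\,2)(3\,4)$, and $\rho_2$ killing $b$ and sending $a$ to $(1\,2)(3\,4)$; then $K=1$, $d(\alpha(1,\mathcal I),\alpha(a,\mathcal I))=0+4=4$, yet $\delta(\Rep_2(C(T,\rho_i)))=8$ for both $i$. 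So the inequality in (iii) needs the extra hypothesis that every $\rho\in\mathcal I$ moves each $t\notin K$ (e.g.\ that all members of $\mathcal I$ have a common kernel). This is harmless for the present paper: in both constructions of Table~\ref{T} the list $\mathcal I$ consists of a fixed faithful representation precomposed with automorphisms of $T$, so every $\rho\in\mathcal I$ is faithful, each summand $|\supp(\rho(t))|$ with $t\neq 1$ is at least the relevant minimal degree, and your summation closes the argument. (For the same reason the equality in (iii) should strictly read $\min_{t\in T\setminus K}$ rather than $\min_{t\in T^{\#}}$, exactly as in your write-up; this too is vacuous here since $K=1$ in both applications.)
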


The lower bound for $\delta(C(T,\mathcal{I}))$ given in Proposition~\ref{lower bound}~(iii)
is the bound we wish to improve on!

\section{The affine group}\label{sec affine}

In this section we use affine groups to construct a family of twisted permutation
codes with minimum distance greater than the
lower bound of Proposition \ref{lower bound}. Let $k$ be an
integer and $p$ an odd prime number such that $p>k\geq 2$, and let $V=\mathbb{F}_p^k$ be
the  vector space of $k$-dimensional row vectors over the finite field
$\mathbb{F}_p$. Let $A_k$ denote the $k\times k$ matrix over $\F_p$ whose $i$th row is equal to 
\[ r_i=\begin{cases}
{\bf{0}}&\textnormal{if $i=1$}\\
e_{i-1}&\textnormal{if $i\neq 1$}\\
\end{cases}\]
where $e_i$ denotes the $i$th standard basis vector of $V$.  In particular we have that
\begin{equation}\label{eq:ak}
A_k = \left(
\begin{array}{ccc}
{\bf{0}}\\
e_1\\
\vdots\\
e_{k-1}
\end{array}\right)=\left(
\begin{array}{cccc}
e_2^T&\ldots&e_k^T&{\bf{0}}
\end{array}\right),
\end{equation}
where $T$ denotes matrix transpose. Now define $$B_k:=I_k+A_k$$ where $I_k$ is the $k\times k$ identity matrix over $\F_p$.

\begin{lemma}\label{lem:bk}
For $2\leq k<p$ and any positive integer $i$, we have
\begin{equation}\label{e0}
B_k^i=\left(
\begin{array}{ccccc}
1&0&0&\cdots &0\\
i&1&0&\cdots &0\\
\binom{i}{2}&i&1&\cdots &0\\
 \vdots&\vdots&\vdots&\ddots&\vdots\\
 \binom{i}{k-1}&\binom{i}{k-2}&\binom{i}{k-3}&\cdots &1\\
\end{array}
\right),
\end{equation}
where for $1\leq j\leq k-1$, ${i}\choose{j}$ inside the matrix denotes $\underbrace{1+\ldots+1}_{{i}\choose{j}}$ in $\F_p$. In particular, $B_k^p=I_k$
\end{lemma}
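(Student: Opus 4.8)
The plan is to recognise $A_k$ as a nilpotent "shift" matrix and then apply the binomial theorem. By the description in \eqref{eq:ak}, the $(a,b)$-entry of $A_k$ is $1$ if $b=a-1$ and $0$ otherwise; that is, $A_k$ is the lower subdiagonal matrix. An easy induction on $m$ (or a direct matrix computation) then shows that $A_k^m$ has $(a,b)$-entry equal to $1$ when $b=a-m$ and $0$ otherwise, for $0\leq m\leq k-1$, and that $A_k^k=0$ since $A_k$ acts on a $k$-dimensional space. Since $I_k$ and $A_k$ commute, the binomial theorem holds over the commutative ring of $k\times k$ matrices over $\F_p$, so
\[
B_k^i=(I_k+A_k)^i=\sum_{m=0}^{i}\binom{i}{m}A_k^m=\sum_{m=0}^{k-1}\binom{i}{m}A_k^m,
\]
the last equality because $A_k^m=0$ for $m\geq k$ (and the binomial coefficients are read modulo $p$, as stipulated).

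From this closed form I would simply read off the entries: for $1\leq a,b\leq k$, the $(a,b)$-entry of $B_k^i$ is $\binom{i}{a-b}$ when $a\geq b$, and $0$ when $a<b$ (note $0\leq a-b\leq k-1$ always, so every nonzero contribution comes from a surviving term). Comparing with \eqref{e0}, where the entry on the $j$th subdiagonal is $\binom{i}{j}$, this is exactly the claimed matrix. (Alternatively, one can avoid the explicit powers of $A_k$ and argue by induction on $i$: $B_k^{i}=B_k^{i-1}B_k=B_k^{i-1}+B_k^{i-1}A_k$, right-multiplication by $A_k$ shifts each column one place to the left, and Pascal's rule $\binom{i-1}{j}+\binom{i-1}{j-1}=\binom{i}{j}$ propagates the form from $B_k^{i-1}$ to $B_k^i$, with base case $B_k^1=I_k+A_k$.)

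For the final assertion $B_k^p=I_k$, take $i=p$ in the closed form. For $1\leq m\leq p-1$ the prime $p$ divides $\binom{p}{m}$ (since $p\mid p!$ while $p\nmid m!\,(p-m)!$), so $\binom{p}{m}=0$ in $\F_p$; because $k-1<p$, this kills every term with $1\leq m\leq k-1$, leaving $B_k^p=\binom{p}{0}A_k^0=I_k$. The only point requiring care in the whole argument is the index bookkeeping — correctly identifying $A_k$ with the subdiagonal shift, keeping the convention that $\binom{i}{j}$ inside the matrix means the reduction mod $p$ of the ordinary binomial coefficient, and checking the ranges of $m$ — so I would state the entrywise form of $A_k^m$ explicitly before invoking the binomial expansion; no genuine obstacle is expected beyond that.
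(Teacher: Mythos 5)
Your proposal is correct and follows essentially the same route as the paper: identify $A_k$ as the nilpotent subdiagonal shift, compute $A_k^m$ explicitly with $A_k^k=0$, expand $(I_k+A_k)^i$ by the binomial theorem, and use $k<p$ to kill the binomial coefficients $\binom{p}{j}$ for $1\leq j\leq k-1$ in the case $i=p$. No issues.
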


\begin{proof}
First note that by multiplying together the two expressions for $A_k$ in \eqref{eq:ak} we find that
\[
A_k^2 = \left(
\begin{array}{ccccc}
e_3^T&\ldots&e_k^T&{\bf{0}}&{\bf{0}}
\end{array}\right).
\]
Then observe that, by induction, for $1\leq s\leq k-1$,
\[
A_k^s=\left(
\begin{array}{cccccc}
e_{s+1}^T&\ldots&e_k^T&{\bf{0}}&\ldots&{\bf{0}}
\end{array}\right)=\left(
\begin{array}{cc}
{\bf 0}& {\bf 0}\\
I_{k-s}&{\bf 0}\\
\end{array}
\right).
\]
Thus $A_k^k={\bf{0}}$, the zero matrix. Now for any positive integer $i$ we evaluate
$$B_k^i=(I_k+A_k)^i=I_k+{{i}\choose{1}}A_k+{{i}\choose{2}}A_k^2+\ldots+{{i}\choose{i-1}}A_k^{i-1}+A_k^i$$
Using the expression for $A_k^s$ above and the fact that $A_k^k={\bf{0}}$, we deduce the expression for $B_k^i$. Since $k<p$, it follows
that $B_k^p=I_k$.
\end{proof}

For a vector $\n=(v_1,
v_2, \ldots, v_k)\in V$, we denote by $\varphi_{\n}$ the translation by $\n$, and
for each $k\geq2$, we define
\[
G_k:=\Big\langle \varphi_{\n}B_k^i:  \ {\n}\in V,  \  i\geq 1\Big\rangle,
\]
and
\begin{equation}\label{eee111}
\overline{G}_k:=\left\langle \left(
\begin{array}{cc}
1&{\n}\\
{\bf 0}& I\\
\end{array}
\right) \left(\begin{array}{cc}
1& {\bf 0}\\
{\bf 0}& B_{k}^i\\
\end{array}
\right):\ {\n}\in V, \  i\geq 1\right\rangle.
\end{equation}
We also set $\Omega(k,1):= I_k$ and, for a positive integer $i\geq 2$, we set $\Omega(k,i):= I+B_k+B_k^2+\cdots
+B_k^{i-1}$. 
\begin{lemma}\label{lem:omk}
With the notation above, we have for any positive integer $i$ and $2\leq k < p$,
\begin{equation}\label{e1}
\Omega(k,i)=\left(
\begin{array}{ccccc}
i&0&0&\cdots &0\\
 \binom{i}{2}&i&0&\cdots &0\\
 \binom{i}{3}&\binom{i}{2}&i&\cdots &0\\
 \vdots&\vdots&\vdots&\ddots&\vdots\\
 \binom{i}{k}&\binom{i}{k-1}&\binom{i}{k-2}&\cdots &i\\
\end{array}
\right),
\end{equation}
where for $1\leq j\leq k$, ${i}\choose{j}$ inside the matrix denotes $\underbrace{1+\ldots+1}_{{i}\choose{j}}$ in $\F_p$. In particular, $\Omega(k,p)={\bf{0}}$.
\end{lemma}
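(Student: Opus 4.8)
The plan is to compute $\Omega(k,i)=\sum_{s=0}^{i-1}B_k^s$ directly from the description of the powers $B_k^s$ obtained in Lemma~\ref{lem:bk}, and then to recognise the resulting entrywise sums as instances of the ``hockey stick'' identity for binomial coefficients.

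First I would record, reading off \eqref{e0}, that for every integer $s\geq 0$ the $(a,b)$-entry of $B_k^s$ equals $\binom{s}{a-b}$ when $a\geq b$ (with the convention $\binom{s}{0}=1$, which accounts for the diagonal term, and $\binom{0}{j}=0$ for $j\geq 1$) and equals $0$ when $a<b$; here, as in the statement, $\binom{s}{j}$ is to be read as the image in $\F_p$ of the corresponding integer. Summing over $0\leq s\leq i-1$, and noting that $\Omega(k,i)$ is a sum of lower triangular matrices, the $(a,b)$-entry of $\Omega(k,i)$ is therefore $0$ for $a<b$ and $\sum_{s=0}^{i-1}\binom{s}{a-b}$ for $a\geq b$.

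Next I would invoke the identity $\sum_{s=0}^{i-1}\binom{s}{j}=\binom{i}{j+1}$, which holds over $\Z$ for all integers $i\geq 1$ and $j\geq 0$ and hence survives reduction modulo $p$; applied with $j=a-b$ it turns the $(a,b)$-entry into $\binom{i}{a-b+1}$, which is precisely the displayed matrix \eqref{e1}. (Equivalently, one may expand $\Omega(k,i)=\sum_{s=0}^{i-1}(I_k+A_k)^s$, interchange the two summations to get $\sum_{t\geq 0}\bigl(\sum_{s=t}^{i-1}\binom{s}{t}\bigr)A_k^t=\sum_{t\geq 0}\binom{i}{t+1}A_k^t$, and then use $A_k^t=\mathbf 0$ for $t\geq k$ from the proof of Lemma~\ref{lem:bk}; this is the same computation organised by subdiagonals rather than by entries, and parallels that proof closely.) Finally, for the last assertion, every entry of $\Omega(k,p)$ on or below the diagonal has the form $\binom{p}{j}$ with $1\leq j\leq k$, and since $p$ is prime and $1\leq j\leq k<p$ we have $\binom{p}{j}\equiv 0\pmod p$; hence $\Omega(k,p)=\mathbf 0$.

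There is no genuine obstacle here: the argument is routine bookkeeping. The only points needing a little care are the indexing convention that lets $\binom{s}{0}=1$ supply the diagonal, and the observation that the hockey stick identity is an identity of integers, so it holds modulo $p$ with no hypothesis beyond what is already assumed — the inequality $k<p$ being needed only at the very last step, to conclude the vanishing of $\binom{p}{j}$ for $1\leq j\leq k$.
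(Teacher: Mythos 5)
Your argument is correct and is essentially the paper's own proof: both compute the $(a,b)$-entry of $\Omega(k,i)$ as $\sum_{s=0}^{i-1}\binom{s}{a-b}$ using the description of $B_k^s$ from Lemma~\ref{lem:bk}, collapse the sum via the hockey stick identity $\sum_{s=0}^{i-1}\binom{s}{j}=\binom{i}{j+1}$, and conclude $\Omega(k,p)=\mathbf{0}$ from $p\mid\binom{p}{j}$ for $1\leq j\leq k<p$. No issues.
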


\begin{proof} First we note the following binomial identity (see, for example, \cite[Identity 135]{count}): for $0\leq\ell\leq i-1$,
$$\sum_{j=0}^{i-1}{{j}\choose{\ell}}=\sum_{j=\ell}^{i-1}{{j}\choose{\ell}}={{i}\choose{\ell+1}}.$$
Using this, and Lemma \ref{lem:bk}, it follows that $a_{s,t}$, the $s,t$ entry of $\Omega(k,i)$, is equal to
\[a_{s,t}=\begin{cases}
0&\textnormal{if $t>s$}\\
\sum_{j=0}^{i-1}{{j}\choose{s-t}}={{i}\choose{s-t+1}}&\textnormal{if $t\leq s$,}\\
\end{cases}\]
giving us the required expression. Since $k<p$, we deduce that $\Omega(k,p)={\bf{0}}$, the zero matrix.
\end{proof}

\begin{remark}\label{rem:1} We observe that for any positive integers $i,j$, $\Omega(k,i+j)=\Omega(k,i)+B_k^i\Omega(k,j)$. Since $B_k^p=I_k$ and $\Omega(k,p)={\bf{0}}$,
this implies that for all $i$, $\Omega(k,i)=\Omega(k,i\,{\rm{mod}}\,p )$, where $i\,{\rm{mod}}\,p$ denotes the integer $i_0$ in $\{1,\ldots p\}$ such that $i-i_0$ is divisible by $p$.
\end{remark}

To each  ${\bf w}\in V$ we associate a map $\tau_{\bf w}:
G_k\rightarrow\overline{G}_k$
which, for each $\n\in V$ and each $i\geq 1$, takes
\[
\varphi_{\n}\longmapsto \left(
\begin{array}{cc}
1&{\n}\\
{\bf 0}& I\\
\end{array}
\right)  \ \textnormal{ and } \   B_k^i \longmapsto \left(
\begin{array}{cc}
1& {\bf w}\Omega(k,i)\\
{\bf 0}& B_k^i\\
\end{array}
\right),
\]
and in general $\varphi_{\n}B_k^i \longmapsto \tau_{\bf w}(\varphi_{\n})\tau_{\bf w}(B_k^i)$.

\begin{lemma}\label{iso}
For each ${\bf w}\in V$, the map $\tau_{\bf w}$ defines an isomorphism from  $G_k$ to $\overline{G}_k$.
\end{lemma}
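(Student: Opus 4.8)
The plan is to verify directly that $\tau_{\bf w}$ is a well-defined homomorphism that restricts to a bijection onto $\overline{G}_k$.

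First I would record the structure of the two groups. The translations $\varphi_{\bf v}$ (${\bf v}\in V$) form a subgroup of $\AGL(k,\F_p)$ isomorphic to $\F_p^k$ and normal in $\AGL(k,\F_p)$; by Lemma~\ref{lem:bk} we have $B_k^p=I_k$, and since $B_k\neq I_k$ and $p$ is prime, $\langle B_k\rangle$ is cyclic of order $p$ with $V\cap\langle B_k\rangle=1$. As $G_k=\langle V,B_k\rangle$ (each listed generator $\varphi_{\bf v}B_k^i$ is a product of a translation and a power of $B_k$, while $\varphi_{\bf v}=\varphi_{\bf v}B_k^p$ and $B_k$ lie in $G_k$), we conclude $G_k=V\rtimes\langle B_k\rangle$ has order $p^{k+1}$ and that each of its elements has a \emph{unique} expression $\varphi_{\bf v}B_k^i$ with ${\bf v}\in V$, $0\le i\le p-1$; hence the rule $\varphi_{\bf v}B_k^i\mapsto\tau_{\bf w}(\varphi_{\bf v})\tau_{\bf w}(B_k^i)$ really does define a function $\tau_{\bf w}\colon G_k\to\GL(k+1,\F_p)$. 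Writing $M({\bf u},i):=\left(\begin{smallmatrix}1&{\bf u}\\{\bf 0}&B_k^i\end{smallmatrix}\right)$, multiplication of the two block matrices gives $\tau_{\bf w}(\varphi_{\bf v}B_k^i)=M\bigl({\bf v}B_k^i+{\bf w}\,\Omega(k,i),\,i\bigr)$, and a one-line computation yields $M({\bf u}_1,i_1)M({\bf u}_2,i_2)=M\bigl({\bf u}_2+{\bf u}_1B_k^{i_2},\,i_1+i_2\bigr)$. Thus $\mathcal M:=\{M({\bf u},i):{\bf u}\in V,\ 0\le i\le p-1\}$ consists of $p^{k+1}$ distinct matrices closed under multiplication, hence is a subgroup of $\GL(k+1,\F_p)$ of order $p^{k+1}$; since the generators of $\overline{G}_k$ in \eqref{eee111} lie in $\mathcal M$ and include every translation $M({\bf u},0)$ (take $i=p$) and the element $M({\bf 0},1)$, which together generate $\mathcal M$, we get $\overline{G}_k=\mathcal M$ and $\tau_{\bf w}(G_k)\subseteq\overline{G}_k$.

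Next I would check that $\tau_{\bf w}$ is a homomorphism. From $\varphi_{\bf v}B_k^i\colon{\bf x}\mapsto({\bf x}+{\bf v})B_k^i$ one gets the multiplication rule $(\varphi_{\bf v}B_k^i)(\varphi_{\bf u}B_k^j)=\varphi_{{\bf v}+{\bf u}B_k^{-i}}\,B_k^{i+j}$ in $G_k$. Computing $\tau_{\bf w}$ of this product and, independently, the product $\tau_{\bf w}(\varphi_{\bf v}B_k^i)\tau_{\bf w}(\varphi_{\bf u}B_k^j)$ via the rule for $M(\cdot,\cdot)$, the equality $\tau_{\bf w}(gh)=\tau_{\bf w}(g)\tau_{\bf w}(h)$ reduces, after cancelling common terms, to the matrix identity
\[
\Omega(k,i)\,B_k^j+\Omega(k,j)=\Omega(k,i+j).
\]
Since $\Omega(k,i)=\sum_{\ell=0}^{i-1}B_k^\ell$, the left-hand side equals $\sum_{\ell=j}^{i+j-1}B_k^\ell+\sum_{\ell=0}^{j-1}B_k^\ell=\sum_{\ell=0}^{i+j-1}B_k^\ell=\Omega(k,i+j)$; equivalently this is Remark~\ref{rem:1} together with the fact that $\Omega(k,i)$, being a polynomial in $B_k$, commutes with $B_k^j$. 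As $B_k^p=I_k$ and $\Omega(k,p)={\bf 0}$, Remark~\ref{rem:1} guarantees that $B_k^i$ and $\Omega(k,i)$ depend only on $i$ modulo $p$, so reading exponents in $\Z/p\Z$ introduces no inconsistency.

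Finally, $\tau_{\bf w}$ is injective: $M\bigl({\bf v}B_k^i+{\bf w}\,\Omega(k,i),i\bigr)=I_{k+1}$ forces $B_k^i=I_k$, hence $i\equiv0\pmod p$ and $\Omega(k,i)={\bf 0}$, and then the top-right block gives ${\bf v}={\bf 0}$; so $\ker\tau_{\bf w}=1$. Since $|G_k|=p^{k+1}=|\overline{G}_k|$, injectivity forces surjectivity, and $\tau_{\bf w}\colon G_k\to\overline{G}_k$ is an isomorphism. The only step needing genuine care is the homomorphism verification: one must fix the left/right conventions for the affine action and respect periodicity modulo $p$, after which everything reduces to the single identity $\Omega(k,i)B_k^j+\Omega(k,j)=\Omega(k,i+j)$ supplied by Remark~\ref{rem:1}, the remaining manipulations being routine bookkeeping. (Alternatively, one may recognise $\tau_{\bf w}$ as the composite of the standard faithful embedding $\AGL(k,\F_p)\hookrightarrow\GL(k+1,\F_p)$ with the automorphism of $G_k$ fixing each $\varphi_{\bf v}$ and sending $B_k\mapsto\varphi_{{\bf w}B_k^{-1}}B_k$, which repackages but does not eliminate this computation.)
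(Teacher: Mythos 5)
Your proof is correct and follows essentially the same route as the paper: the homomorphism property is verified by the same block-matrix computation reducing to the identity $\Omega(k,i+j)=\Omega(k,i)+B_k^i\,\Omega(k,j)$ of Remark~\ref{rem:1}, and injectivity by the same trivial-kernel argument. The only (minor) divergences are that you add an explicit well-definedness check via the semidirect-product structure $G_k=V\rtimes\langle B_k\rangle$, which the paper leaves implicit, and you obtain surjectivity by comparing the orders $|G_k|=|\overline{G}_k|=p^{k+1}$ rather than by the paper's explicit solution of ${\n}'B_k^i={\bf w}\Omega(k,i)+{\n}B_k^i$ for ${\n}$; both are valid.
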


\begin{proof}
First of all, we show that $\tau_{\bf w}$ is a homomorphism: for each $\n_1,\n_2\in V$ and for each $i,j$,
$$
\begin{array}{lll} \tau_{\bf w}(\varphi_{\n_1}B_k^i)\tau_{\bf w}(\varphi_{\n_2}B_k^j)&=&
\left(
\begin{array}{cc}
1&{\n}_1\\
{\bf 0}& I\\
\end{array}
\right) \left(
\begin{array}{cc}
1& {\bf w}\Omega(k,i)\\
{\bf 0}& B_k^i\\
\end{array}
\right) \left(
\begin{array}{cc}
1&{\n}_2\\
{\bf 0}& I\\
\end{array}
\right) \left(
\begin{array}{cc}
1& {\bf w}\Omega(k,j)\\
{\bf 0}& B_k^j\\
\end{array}
\right)
\\[0.5cm]
&=&\left(
\begin{array}{cc}
1& {\bf w}\Omega(k,i+j)+{\n}_1B_k^{i+j}+{\n}_2B_k^{j}\\
{\bf 0}& B_k^{i+j}\\
\end{array}
\right) \ \ \  \ \ \ \  \mbox{(note that $B_k^iB_k^j=B_k^{i+j}$)}
\\[0.5cm]
&=&\left(
\begin{array}{cc}
1&{\n}_1+{\n}_2B_k^{-i}\\
{\bf 0}& I\\
\end{array}
\right) \left(
\begin{array}{cc}
1& {\bf w}\Omega(k,i+j)\\
{\bf 0}& B_k^{i+j}\\
\end{array}
\right)\\[0.5cm]
&=& \tau_{\bf w}(\varphi_{\n_1+\n_2B_k^{-i}}B_k^i B_k^j)=
\tau_{\bf w}(\varphi_{\n_1}B_k^i\cdot\varphi_{\n_2}B_k^j).
\end{array}
$$
Next we prove that ${\rm Ker}(\tau_{\bf w})=1_G$: if
$\varphi_{\n}B_k^i\in {\rm
Ker}(\tau_{\bf w})$, then
\[
\tau_{\bf w}(\varphi_{\n}B_k^i)=\left(
\begin{array}{cc}
1& {\bf w}\Omega(k,i)+{\n}B_k^i\\
{\bf 0}& B_k^i\\
\end{array}
\right)=I,
\]
and hence $B_k^i=I$, which implies by Lemma~\ref{lem:bk}, Lemma~\ref{lem:omk} and Remark~\ref{rem:1}
that $i=p\,\,{\rm mod}\,p$,
$\Omega(k,p)={\bf 0}$ and ${\n}={\bf 0}$. Thus $\varphi_{\n}B_k^i = I$.

Finally, we show that $\tau_{\bf w}$ is onto: for an arbitrary element
\[
g=\left(
\begin{array}{cc}
1& {\n'}B_k^i\\
{\bf 0}& B_{k}^i\\
\end{array}
\right) \in \overline{G}_k,
\]
there is a vector ${\n}\in V$ such that ${\n'}B_k^i={\bf
w}\Omega(k,i)+{\n}B_k^i$, since $B_k^i$ is invertible. Then
$g=\tau_{\bf w}(\varphi_{\n}B_k^i)$.
\end{proof}

Let $\Omega=\{(1, \n) | \n\in V\}$. Then the map $\lambda :
V\rightarrow \Omega$ given by $\n \mapsto (1, \n)$ is a
bijection, and we deduce the following.

\begin{proposition}
The pair $(\lambda, \tau_{\bf 0})$ is a permutational isomorphism from
the action of $G_k$ on $V$ to the action of $\overline{G}_K$ on
$\Omega$.
\end{proposition}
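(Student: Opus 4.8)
The plan is to verify directly that $(\lambda, \tau_{\bf 0})$ satisfies the defining equation of a permutational isomorphism, namely $\lambda(v^g) = \lambda(v)^{\tau_{\bf 0}(g)}$ for all $v \in V$ and all $g \in G_k$. By Lemma~\ref{iso} we already know $\tau_{\bf 0}$ is a group isomorphism from $G_k$ to $\overline{G}_k$, and $\lambda$ is visibly a bijection from $V$ to $\Omega$, so the only thing left to check is the compatibility of the two actions. Since $G_k$ is generated by the elements $\varphi_{\n}B_k^i$, and both actions are by group elements, it suffices to verify the identity on a generating set; in fact, because $\lambda$ is a fixed bijection and $\tau_{\bf 0}$ a homomorphism, once the identity holds for two elements $g, h$ it holds for $gh$, so checking generators is enough.

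First I would record how $\overline{G}_k$ acts on $\Omega = \{(1,\n) : \n \in V\}$. Writing a typical generator of $\overline{G}_k$ (using the product form appearing in \eqref{eee111} and the definition of $\tau_{\bf 0}$, for which ${\bf w} = {\bf 0}$ kills the $\Omega(k,i)$ term) as $\left(\begin{smallmatrix} 1 & \n_0 B_k^i \\ {\bf 0} & B_k^i \end{smallmatrix}\right)$ acting on the right on row vectors $(1,\n)$, one computes $(1,\n)\left(\begin{smallmatrix} 1 & \n_0 B_k^i \\ {\bf 0} & B_k^i \end{smallmatrix}\right) = (1, \n_0 B_k^i + \n B_k^i)$. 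On the other side, the action of $\varphi_{\n}B_k^i \in G_k$ on $V$ sends $u \in V$ to $(u + \n)^{B_k^i} = (u+\n)B_k^i = uB_k^i + \n B_k^i$ — one must be slightly careful here about the order in which the translation $\varphi_\n$ and the linear map $B_k^i$ are composed, following the convention fixed in the definition of $G_k$, but either way the computation is a one-liner. Applying $\lambda$ to this gives $(1, uB_k^i + \n B_k^i)$, which matches the right-hand side above with $\n_0 = \n$. So the identity holds on generators and hence everywhere.

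The main (and really only) obstacle is bookkeeping about conventions: making sure that the right-versus-left action of matrices, the passive-form identification of permutations with vertices, and the composition order used in "$\varphi_{\n}B_k^i \longmapsto \tau_{\bf w}(\varphi_{\n})\tau_{\bf w}(B_k^i)$" are all used consistently, so that the translation part and the linear part land in the right slots of the affine matrix. Once the conventions are pinned down the verification is a direct matrix multiplication essentially identical to the homomorphism computation already carried out in the proof of Lemma~\ref{iso}. I would therefore present the proof as: cite Lemma~\ref{iso} for the isomorphism $\tau_{\bf 0}$, note $\lambda$ is a bijection, then display the two short computations above for a generator $\varphi_{\n}B_k^i$ and conclude by the generation argument that equation~(2.1) holds for all $g \in G_k$.
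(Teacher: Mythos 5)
Your proof is correct and takes essentially the same route as the paper: compute $\lambda$ applied to the image of a point under $\varphi_{\bf u}B_k^i$, compute the right-action of $\tau_{\bf 0}(\varphi_{\bf u}B_k^i)$ on $\lambda$ of that point, and observe both equal $(1,({\bf u}+\n)B_k^i)$. The only cosmetic difference is that you frame this as a check on generators followed by a multiplicativity argument, whereas the paper just verifies it directly for an arbitrary element $\varphi_{\bf u}B_k^i$ — and since every element of $G_k$ already has this form (as the closure computation in Lemma~\ref{iso} shows), the generation step is harmless but redundant.
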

\begin{proof} From the definition of $\lambda$ we
have
\[
\lambda(\n \varphi_{\bf u}B_k^i)=(1, (\n+{\bf u})B_k^i)\ \
\textnormal{for} \ \ {\bf u}, \n\in V, \ i\geq 1, \
k\geq 2.
\]
On the other hand, it follows from the definition of $\tau_{\bf 0
}$ that
\[
\lambda(\n)\tau_{\bf 0}(\varphi_{\bf u}B_k^i)=(1, \n)\left(
\begin{array}{cc}
1& {\bf u}B_k^i\\
{\bf 0}& B_{k}^i\\
\end{array}
\right)=(1, ({\bf u}+\n)B_k^i).
\]
Thus, $\lambda(\n \varphi_{\bf u}B_k^i)=\lambda(\n)\tau_{\bf
0}(\varphi_{\bf u}B_k^i)$, and hence $(\lambda, \tau_{\bf
0})$ is a permutational isomorphism.
\end{proof}

We will need the following information about fixed points
of elements in this action.

\begin{lemma}\label{L4}
Let $\Omega=\{(1, \n) | \n\in V=\mathbb{F}_p^k\}$, let
$\overline{G}_k$ be as in \eqref{eee111}, and let

\begin{equation}\label{defg}
1\neq g=\left(
\begin{array}{cc}
1&{\n}\\
{\bf 0}& I\\
\end{array}
\right) \left(\begin{array}{cc}
1& {\bf 0}\\
{\bf 0}& B_{k}^i\\
\end{array}
\right)=\left(
\begin{array}{cc}
1& {\n}B_{k}^i\\
{\bf 0} & B_{k}^i\\
\end{array}
\right)\in \overline{G}_k,
\end{equation}
with ${\n}B_{k}^i=(v_1, v_2, \ldots, v_k)\in V$ for some $i\geq 1$.
Then $g$ fixes exactly $0$ or $p$
points in $\Omega$. Moreover,

$(1)$ $g$ fixes $p$ points $\Longleftrightarrow$ $i\neq p\,\,{\rm mod}\,p$ and
$v_k=0$; and

$(2)$ $g$ has no fixed points $\Longleftrightarrow$ $i=p\,\,{\rm mod}\,p$, or
$v_k\neq 0$.
\end{lemma}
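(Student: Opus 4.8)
The plan is to analyze directly when the equation $(1,\n)\cdot g = (1,\n)$ has a solution $\n \in V$, and to count solutions when they exist. From the matrix form of $g$ in \eqref{defg}, a point $(1,\mathbf{u})\in\Omega$ is fixed by $g$ precisely when $(1,\mathbf{u})\left(\begin{smallmatrix}1 & \n B_k^i\\ \mathbf{0} & B_k^i\end{smallmatrix}\right) = (1, \mathbf{u})$, i.e. when $\mathbf{u} B_k^i + \n B_k^i = \mathbf{u}$, equivalently $\mathbf{u}(B_k^i - I) = -\n B_k^i = -(v_1,\ldots,v_k)$. So the fixed point set of $g$ corresponds to the solution set in $V$ of the linear system $\mathbf{u}(B_k^i - I) = -(v_1,\ldots,v_k)$, an affine subspace of $V$ (empty or a coset of $\ker(B_k^i - I)$ acting on the right). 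Thus the whole statement reduces to: (a) computing $\dim\ker(B_k^i - I)$, and (b) deciding consistency of the system, i.e. whether $-(v_1,\ldots,v_k)$ lies in the row space (image) of $B_k^i - I$.

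First I would dispose of the case $i \equiv 0 \pmod p$. By Lemma~\ref{lem:bk} and Remark~\ref{rem:1} (or just $B_k^p = I_k$), if $i = p\,\mathrm{mod}\,p$ then $B_k^i = I_k$, so $g = \left(\begin{smallmatrix}1 & \n\\ \mathbf{0} & I\end{smallmatrix}\right)$ with $\n \neq \mathbf{0}$ (since $g \neq 1$); then the system becomes $\mathbf{0} = -\n$, which is inconsistent, so $g$ has no fixed points. This handles one direction of both (1) and (2). For the remaining case $i \not\equiv 0 \pmod p$, by Remark~\ref{rem:1} we may replace $i$ by $i\,\mathrm{mod}\,p \in \{1,\ldots,p-1\}$. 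Here the key structural fact is that $B_k = I_k + A_k$ with $A_k$ the nilpotent single-subdiagonal shift (as computed in the proof of Lemma~\ref{lem:bk}: $A_k^s$ is the shift by $s$ and $A_k^k = \mathbf{0}$). Using Lemma~\ref{lem:bk}, $B_k^i - I_k$ is a strictly lower triangular matrix whose first subdiagonal entries are all equal to $i$ (the constant $\binom{i}{1}$); since $1 \le i \le p-1$, this value $i$ is nonzero in $\F_p$. Hence $B_k^i - I_k$ has rank exactly $k-1$: its first $k-1$ rows, read off from \eqref{e0}, form an echelon block (rows $2,\ldots,k$ of $B_k^i$ minus $e_2,\ldots,e_k$) with nonzero pivots $i$, while its first row is zero. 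Therefore $\ker(B_k^i - I_k)$ is one-dimensional, so whenever the system is consistent it has exactly $|\F_p| = p$ solutions, which proves the ``$0$ or $p$'' dichotomy.

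It remains to pin down consistency. Since $B_k^i - I_k$ has rank $k-1$ with its \emph{last} column equal to $\mathbf{0}$ (the entries above the diagonal in column $k$ are zero, the diagonal entry is $0$, and there is nothing below), the image of $\mathbf{u} \mapsto \mathbf{u}(B_k^i - I_k)$ — i.e. the row space — is contained in $\{(x_1,\ldots,x_k) : x_k = 0\}$, and by the rank count it equals that hyperplane exactly. Hence the system $\mathbf{u}(B_k^i - I_k) = -(v_1,\ldots,v_k)$ is consistent if and only if $-v_k = 0$, i.e. $v_k = 0$. Combining: for $i \not\equiv 0\pmod p$, $g$ has $p$ fixed points iff $v_k = 0$ and no fixed points iff $v_k \neq 0$; together with the $i \equiv 0$ case this gives exactly statements (1) and (2).

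The main obstacle is the consistency analysis — correctly identifying the row space of $B_k^i - I_k$ as the hyperplane $x_k = 0$. The rank-$(k-1)$ claim is immediate from the constant subdiagonal $i \neq 0$, but one must be careful about whether it is the last row or last column of $B_k^i - I_k$ that vanishes, since we are multiplying $\mathbf{u}$ on the left; the relevant fact is that column $k$ of $B_k^i$ equals $e_k^T$ (the top-right block structure of $A_k^s$ in the proof of Lemma~\ref{lem:bk} shows each $A_k^s$ has zero last column for $s\ge 1$), so column $k$ of $B_k^i - I_k$ is zero and the image omits the last coordinate. A clean way to present this is to exhibit the solution explicitly when $v_k=0$: solve $\mathbf{u}(B_k^i-I_k) = -(v_1,\ldots,v_{k-1},0)$ by back-substitution down the subdiagonal, which both proves consistency and re-confirms the one-parameter ($p$ solutions) family.
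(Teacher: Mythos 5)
Your proof is correct, and it is essentially the same computation as the paper's, repackaged in the language of rank--nullity and row spaces. The paper writes out the fixed-point condition coordinate-by-coordinate, observes that the $j=k$ equation forces $v_k=0$, rules out $i\equiv 0\pmod p$ since then the whole system degenerates to $\n=\mathbf{0}$ contradicting $g\neq 1$, and then back-substitutes ($x_k$ from $v_{k-1}$, $x_{k-1}$ from $v_{k-2}$ and $x_k$, etc.) to determine $x_2,\ldots,x_k$ uniquely with $x_1$ free, giving exactly $p$ solutions. You instead express the fixed-point set as the affine solution set of $\mathbf{u}(B_k^i - I_k) = -\n B_k^i$, compute the rank of $B_k^i-I_k$ as $k-1$ from the constant nonzero subdiagonal, identify the image (row space) as the hyperplane $x_k=0$ by noting the last column vanishes, and conclude. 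The two arguments are the same linear algebra: your rank-$(k-1)$ observation is precisely what makes the paper's back-substitution work, and your row-space-equals-$\{x_k=0\}$ claim is precisely the paper's $j=k$ consistency condition. Your framing is slightly cleaner in that it separates the counting (kernel dimension) from the consistency (row space) and makes the ``$0$ or $p$'' dichotomy structurally transparent before deciding which case occurs, whereas the paper establishes it as a by-product of the explicit solution; on the other hand the paper's version exhibits the fixed points explicitly, which some readers may prefer. Your care about column-versus-row vanishing (since $\mathbf{u}$ acts on the left) is exactly the right point to be cautious about, and you resolve it correctly.
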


\begin{proof}
Suppose $g$ has a fixed point $(1, {\bf x})$ where ${\bf x}=(x_1,
x_2, \ldots, x_k)$. Then by Lemma~\ref{lem:bk} and some easy
calculations, we obtain
\[
\begin{array}{lll}
(1,{\bf x}) = (1, x_1, x_2, \ldots, x_k)g & = & (1, x_1, x_2, \ldots, x_k)\left(
\begin{array}{cc}
1& {\n}B_{k}^i\\
{\bf 0}& B_{k}^i\\
\end{array}
\right)\\[0.5cm] &=& (1, x_1, x_2, \ldots, x_k)\left(
\begin{array}{ccccc}
1&v_1&v_2&\cdots &v_k\\
0&1&0&\cdots &0\\
0&i&1&\cdots &0\\
 \vdots&\vdots&\vdots&\ddots&\vdots\\
0&\binom{i}{k-1}&\binom{i}{k-2}&\cdots &1\\
\end{array}
\right)\\[1.5 cm] &=&
(1, v_1+\sum\limits_{t=0}^{k-1}\binom{i}{t}x_{t+1},
v_2+\sum\limits_{t=0}^{k-2}\binom{i}{t}x_{t+2}, \ldots, v_k+x_k).
\end{array}
\]
Comparing the entries in both sides of the equation, we see that

\begin{equation}\label{eq:first}
x_j = v_j + \sum\limits_{t=0}^{k-j}\binom{i}{t} x_{t+j}.  \ \ \
\textnormal{ for $j=1,\ldots,k$.}
\end{equation}
In particular, \eqref{eq:first} with $j=k$ implies that $v_k=0$.
We claim that $i\ne p\,\,{\rm mod}\,p$. Suppose to the contrary that $i=p\,\,{\rm mod}\,p$.
Then \eqref{eq:first} implies that
$x_j=v_j+x_j$ for all $j$, so $\n={\bf 0}$. In particular, $g=
1$, which is a contradiction.
Hence, if $g$ fixes a point then $i\neq p\,\,{\rm mod}\,p$
and $v_k=0$.

Conversely suppose that  $i\neq p\,\,{\rm mod}\,p$
and $v_k=0$. A point $(1,{\bf x})$ is fixed by $g$ if and only if
the equations \eqref{eq:first} hold.
Now $i\,\,{\rm mod}\,p$ is invertible in $\mathbb{F}_p$ (which we identify with $i$ here), so
\eqref{eq:first} imply that
\begin{equation}\label{eq:second}
x_{j+1}=\begin{cases}-\frac{1}{i}\,v_{k-1},\,\,\,\,\,\,\,\,\,\,\,\,\,\,\,\,\,\,\,\,\,\,\,\,\,\,\,\,\,\,\,\,\,\,\,\,\,\,\textnormal{ if $j=k-1$;}\\
\frac{-1}{i}\big(v_j+\sum\limits_{t=2}^{k-j}\binom{i}{t}x_{t+j}\big),\,\,\,\,\textnormal{
if $j=1,\ldots,k-2$.}
\end{cases}
\end{equation}
Thus the entries $x_2, \ldots, x_k$ are
determined by ${\n}B_{k}^i=(v_1, v_2, \ldots, v_k)$, while $x_1$ is unrestricted;
so $g$ fixes exactly $p$ points of $\Omega$, namely
$(1,\  a, \ x_2, \ \ldots, \ x_k )$ for $a\in
\mathbb{F}_p$.
This completes the proof.
\end{proof}

Let ${\bf w}_r:=(0, 0, \ldots, 0, r)\in V$ where $r\in
\mathbb{F}_p$. Note that, if $r\neq 0$, then the
composition of the isomorphisms $\tau_{{\bf w}_r}$ and $\tau_{{\bf
w}_0}^{-1}$, namely $\tau_{{\bf w}_r}\tau_{{\bf w}_0}^{-1}$, is
an automorphism of $\overline{G}_k$.
We need the following information about these automorphisms.

\begin{lemma}\label{fixpoint}
For each $r\in \mathbb{F}_p$, $\tau_{w_r} \tau_{w_0}^{-1} \in
\rm{Aut}(\overline{G}_k)$.
Let $1\ne g\in \overline{G}_k$, as in $\eqref{defg}$.
\begin{enumerate}
\item[(i)] If $i=p\,\,{\rm mod}\,p$ then for each $r\in \mathbb{F}_p$,
$\tau_{{\bf w}_r}\tau_{{\bf w}_0}^{-1}(g)$ has no fixed points in $\Omega$.
\item[(ii)] If $i\ne p\,\,{\rm mod}\,p$ then there exists a unique $r\in \mathbb{F}_p\setminus\{0\}$
such that $\tau_{{\bf w}_r}\tau_{{\bf w}_0}^{-1}(g)$ fixes $p$ points of $\Omega$,
and for each $s\ne r$,
$\tau_{{\bf w}_s}\tau_{{\bf w}_0}^{-1}(g)$ has no fixed points in $\Omega$.
\end{enumerate}
\end{lemma}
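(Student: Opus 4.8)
The plan is to reduce everything to the fixed-point analysis already carried out in Lemma~\ref{L4}. First I would compute the effect of the automorphism $\tau_{{\bf w}_r}\tau_{{\bf w}_0}^{-1}$ on the element $g$ of \eqref{defg}. Writing $g=\varphi_{{\bf u}}B_k^i$ with ${\bf u}={\bf n}$ (so that ${\bf n}B_k^i=(v_1,\ldots,v_k)$), the map $\tau_{{\bf w}_0}^{-1}$ sends $g$ to the preimage $\varphi_{{\bf m}}B_k^i\in G_k$ where ${\bf m}B_k^i={\bf n}B_k^i$, i.e.\ ${\bf m}={\bf n}$ (since $\tau_{{\bf w}_0}$ is just the ``untwisted'' map sending $B_k^i\mapsto\operatorname{diag}(1,B_k^i)$). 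Applying $\tau_{{\bf w}_r}$ then gives
\[
\tau_{{\bf w}_r}\tau_{{\bf w}_0}^{-1}(g)=\left(\begin{array}{cc}1&{\bf n}\\ {\bf 0}&I\end{array}\right)\left(\begin{array}{cc}1&{\bf w}_r\Omega(k,i)\\ {\bf 0}&B_k^i\end{array}\right)=\left(\begin{array}{cc}1&{\bf w}_r\Omega(k,i)+{\bf n}B_k^i\\ {\bf 0}&B_k^i\end{array}\right).
\]
So $\tau_{{\bf w}_r}\tau_{{\bf w}_0}^{-1}(g)$ is again of the form \eqref{defg}, but with new bottom-row vector ${\bf n}B_k^i$ replaced by ${\bf v}(r):={\bf w}_r\Omega(k,i)+{\bf n}B_k^i$. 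That $\tau_{{\bf w}_r}\tau_{{\bf w}_0}^{-1}$ is an automorphism of $\overline{G}_k$ is immediate from Lemma~\ref{iso} since it is a composition of the isomorphism $\tau_{{\bf w}_0}^{-1}:\overline{G}_k\to G_k$ with the isomorphism $\tau_{{\bf w}_r}:G_k\to\overline{G}_k$.

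Next I would invoke Lemma~\ref{L4} applied to the transformed element: it fixes $p$ points exactly when $i\ne p\,\,{\rm mod}\,p$ and the last coordinate of ${\bf v}(r)$ is zero, and it has no fixed points otherwise. So everything comes down to tracking the last coordinate of ${\bf v}(r)={\bf w}_r\Omega(k,i)+{\bf n}B_k^i$ as a function of $r$. Since ${\bf w}_r=(0,\ldots,0,r)=r\,e_k$, the vector ${\bf w}_r\Omega(k,i)$ is $r$ times the last row of $\Omega(k,i)$, and by Lemma~\ref{lem:omk} the last entry of the last row of $\Omega(k,i)$ is $i\,\,{\rm mod}\,p$ (identified with $i\in\mathbb{F}_p$). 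Hence the last coordinate of ${\bf v}(r)$ equals $r\cdot i+v_k$, where $v_k$ is the last coordinate of ${\bf n}B_k^i$, the same $v_k$ appearing in \eqref{defg}.

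With this the two cases fall out. In case (i), $i=p\,\,{\rm mod}\,p$, so $\tau_{{\bf w}_r}\tau_{{\bf w}_0}^{-1}(g)$ has bottom-left block $B_k^i=I$; by Lemma~\ref{L4}(2) (the clause ``$i=p\,\,{\rm mod}\,p$'') this element has no fixed points in $\Omega$ for every $r\in\mathbb{F}_p$ --- one should note separately that this transformed element is still nontrivial, since $\tau_{{\bf w}_r}\tau_{{\bf w}_0}^{-1}$ is an automorphism and $g\ne 1$. In case (ii), $i\ne p\,\,{\rm mod}\,p$ so $i$ is invertible in $\mathbb{F}_p$; the last coordinate $r\cdot i+v_k$ of ${\bf v}(r)$ vanishes for the unique value $r=-v_k/i$, and by Lemma~\ref{L4} this is precisely the value of $r$ for which $\tau_{{\bf w}_r}\tau_{{\bf w}_0}^{-1}(g)$ fixes $p$ points, while for every other $s$ the last coordinate is nonzero and Lemma~\ref{L4}(2) gives no fixed points. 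The only point needing a word of care is whether this unique $r$ can be $0$: the statement asserts the fixed-point value lies in $\mathbb{F}_p\setminus\{0\}$, so I would check that $v_k\ne 0$ here --- indeed if $v_k=0$ then by Lemma~\ref{L4}(1) the original $g$ (which is $\tau_{{\bf w}_0}\tau_{{\bf w}_0}^{-1}(g)$, the $r=0$ case) already fixes $p$ points, and one either absorbs that into the statement or observes that the relevant $g$ arising later satisfies $v_k\ne 0$; I expect this small normalisation to be the only genuinely delicate point, the rest being the bookkeeping above.
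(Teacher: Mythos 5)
Your proof is correct and follows essentially the same route as the paper's: compute $\tau_{{\bf w}_r}\tau_{{\bf w}_0}^{-1}(g)$ explicitly, note that only the top-right vector changes to ${\bf w}_r\Omega(k,i)+{\bf n}B_k^i$, and then apply Lemma~\ref{L4} by tracking the last coordinate $v_k+ir$ via Lemma~\ref{lem:omk}. Your closing remark that the unique value $r=-v_k/i$ equals $0$ when $v_k=0$ is a legitimate catch about the phrasing ``$r\in\mathbb{F}_p\setminus\{0\}$'' in the statement (the paper's own proof does not address this either); it does not affect the support-sum computation in Proposition~\ref{T affine}, which only uses that exactly one $r\in\mathbb{F}_p$ yields $p$ fixed points.
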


\begin{proof}
It follows from Lemma \ref{iso} that $\tau_{w_r}\tau_{w_0}^{-1}$
is an automorphism of $\overline{G}_k$. Direct calculation gives
\[
\tau_{{\bf w}_r}\tau_{{\bf w}_0}^{-1}(g)=\tau_{{\bf
w}_r}(\varphi_{\n}B_k^i)=\left(
\begin{array}{cc}
1& {\bf w}_r\Omega(k,i)+{\n}B_{k}^i\\
{\bf 0}& B_{k}^i\\
\end{array}
\right).
\]
If $i=p\,\,{\rm mod}\,p$, then $\tau_{{\bf w}_r}\tau_{{\bf w}_0}^{-1}(g)=g$,
and hence, by Lemma \ref{L4}, $\tau_{{\bf w}_r}\tau_{{\bf w}_0}^{-1}(g)$
has no fixed points in $\Omega$ for all $r\in \mathbb{F}_p$, proving part (i).
Now suppose $i\neq p\,\,{\rm mod}\,p$. Then
\[
\tau_{{\bf w}_r}\tau_{{\bf w}_0}^{-1}(g)=\left(
\begin{array}{cc}
1& {\bf u}B_{k}^i\\
{\bf 0} & B_{k}^i\\
\end{array}
\right),
\]
where ${\bf u}B_k^i=(u_1, u_2, \ldots, u_k)={\bf
w}_r\Omega(k,i)+{\n}B_{k}^i$. Thus, using Lemma \ref{lem:omk}, we
obtain
$$
(u_1, u_2, u_3, \ldots, u_k)=(v_1+\binom{i}{k}r, v_2+\binom{i}{k-1}r, v_3+\binom{i}{k-2}r, \ldots,
 v_k+\binom{i}{1}r).
$$
By Lemma \ref{L4}, $\tau_{{\bf w}_r}\tau_{{\bf w}_0}^{-1}(g)$
fixes $p$ points if and only if $u_k=v_k+ir=0$. Since, $i\neq p\,\,{\rm mod}\,p$
we can identify $i$ with $i\,\,{\rm mod}\,p  \in\F_p\backslash\{0\}$, and so the previous expression
holds if and only if
$r = -\frac{v_k}{i}$.
If $s\neq -\frac{v_k}{i}$, then $\tau_{{\bf
w}_s}\tau_{{\bf w}_0}^{-1}(g)$ has no fixed points in $\Omega$, proving part (ii).
\end{proof}

Table~\ref{T1} summarises the information about the support sizes of
$\tau_{{\bf w}_r}\tau_{{\bf w}_0}^{-1}(g)$ derived in the proof of
Lemma \ref{fixpoint}.

\begin{table}[h] \centering
\begin{tabular}{l|c|c|c}
\mbox{The size of} $\supp(\tau_{{\bf w}_r}\tau_{{\bf
w}_0}^{-1}(g))$ & $i=p\,\,{\rm mod}\,p$ &$i\neq p\,\,{\rm mod}\,p,\, r=-v_k/i$ &  $i\neq p\,\,{\rm mod}\,p,\, r\ne -v_k/i$ \\
\hline \hline
$|\supp(\tau_{{\bf w}_r}\tau_{{\bf w}_0}^{-1}(g))|$& $p^k$  &$p^k-p$ & $p^k$ \\
\end{tabular} \\[0.2cm]
\caption{$|\supp(\tau_{{\bf w}_r}\tau_{{\bf w}_0}^{-1}(g))|$ for
$g$ as in \eqref{defg}
 with ${\n}B_k^i=(v_1,\ldots , v_k)$.}\label{T1}
\end{table}

We define a family of twisted permutation codes for the group $T=\overline{G}_k$ as follows.
Let $V=\mathbb{F}_p^k$, a $k$-dimensional vector space over
$\mathbb{F}_p$, let $m=p^k$, and choose an ordering
$({\bf v}_1,{\bf v}_2,\ldots,{\bf v}_m)$ for the $m$ vectors of $V$.
Also let $\Omega=\{(1, \n) | \n\in V\}$. For each  $g\in
T$, and $\tau\in\Aut(T)$, define a vertex
in the Hamming graph of length $m$ over $\Omega$ by
\begin{equation}\label{deftauaction}
\alpha(\tau(g))=((1,{\bf v}_1){\tau(g)},\ldots,(1,{\bf v}_m){\tau(g)}),
\end{equation}
that is, the $i$th entry of
$\alpha(\tau(g))$ is the image of $(1,{\bf v}_i)$ under $\tau(g)$.
Let $\mathcal{I}:=(\tau_{{\bf w}_0}\tau_{{\bf w}_0}^{-1}, \tau_{{\bf w}_1}\tau_{{\bf w}_0}^{-1},\ldots,\tau_{{\bf w}_{p-1}}\tau_{{\bf w}_0}^{-1})$ be the sequence of automorphisms of $T$ in Lemma~$\ref{fixpoint}$.
If we identify $T$ with the permutation group it induces on $\Omega$, then each entry of $\mathcal{I}$ is a permutation representation of $T$.
We associate with $g$ the following vertex of the
Hamming graph of length $mp$ over
$\Omega$:
$$
\alpha(g,\mathcal{I}) := (\alpha(\tau_{{\bf w}_0}\tau_{{\bf w}_0}^{-1}(g)),\alpha(\tau_{{\bf w}_1}\tau_{{\bf w}_0}^{-1}(g)),
\ldots,\alpha(\tau_{{\bf w}_{p-1}}\tau_{{\bf w}_0}^{-1}(g)))
$$
that is, $\alpha(g,\mathcal{I})$ is a $p$-tuple of vertices of the form \eqref{deftauaction}.

We define the permutation code
\[
C(T) = \{\alpha(g) | g\in T\},
\]
and note that $C(T)$ is equal to $\{\alpha(\tau(g)) | g\in T\}$ for each $\tau\in\Aut(T)$. We define
the twisted permutation code relative to $\mathcal{I}$ as
$$
C(T, \mathcal{I})=\{\alpha(g,\mathcal{I}) | g\in T\}.
$$

\begin{proposition}\label{T affine}
Let $T=\overline{G}_k$,
$\mathcal{I}$, and $C(T,\mathcal{I})$ be as above.
Then the minimum distance of
$C(T, \mathcal{I})$ is $p^{k+1}-p$, while the lower bound
given by Proposition $\ref{lower bound}$ for this code is
$p^{k+1}-p^2$.
\end{proposition}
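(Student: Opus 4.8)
The plan is to compute both quantities directly using the machinery already assembled. By Proposition~\ref{lower bound}(iii), the minimum distance of $C(T,\mathcal{I})$ equals $\min_{g\in T^\#}\sum_{r=0}^{p-1}|\supp(\tau_{{\bf w}_r}\tau_{{\bf w}_0}^{-1}(g))|$, and the lower bound it asserts is $\min_r \delta(\Rep_p(C(T,\tau_{{\bf w}_r}\tau_{{\bf w}_0}^{-1})))$. Since each $\tau_{{\bf w}_r}\tau_{{\bf w}_0}^{-1}$ is an automorphism of $T$, the image $T\tau_{{\bf w}_r}\tau_{{\bf w}_0}^{-1}$ is permutationally isomorphic to $T$ acting on $\Omega$, so by Lemma~\ref{2.1lem} every one of the $p$ constituent codes $C(T,\tau_{{\bf w}_r}\tau_{{\bf w}_0}^{-1})$ has the same minimum distance, namely the minimal degree of $T$ on $\Omega$; and $\delta(\Rep_p(C(T,\tau)))=p\cdot\delta(C(T,\tau))$. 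So the two numbers to pin down are: (a) the minimal degree $d_0$ of $\overline{G}_k$ on $\Omega$, which gives the lower bound $p\,d_0$; and (b) the exact minimum over $g\in T^\#$ of $\Sigma(g):=\sum_{r=0}^{p-1}|\supp(\tau_{{\bf w}_r}\tau_{{\bf w}_0}^{-1}(g))|$.

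For (a): every nonidentity $g\in\overline{G}_k$ has the form \eqref{defg}, and by Lemma~\ref{L4} it fixes either $0$ or $p$ points of $\Omega$, hence $|\supp(g)|$ is either $p^k$ or $p^k-p$. The value $p^k-p$ is attained: take $i=1$ and ${\n}B_k$ with last coordinate $v_k=0$ (e.g.\ $\n$ chosen so that ${\n}B_k=e_1$), which is a genuine nonidentity element since $i=1\ne p\bmod p$. Thus $d_0=p^k-p$ and the Proposition~\ref{lower bound} lower bound is $p(p^k-p)=p^{k+1}-p^2$, as claimed.

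For (b): fix $1\ne g$ as in \eqref{defg} with ${\n}B_k^i=(v_1,\ldots,v_k)$. Table~\ref{T1} (equivalently Lemma~\ref{fixpoint}) records $|\supp(\tau_{{\bf w}_r}\tau_{{\bf w}_0}^{-1}(g))|$ for each $r$. If $i=p\bmod p$ then all $p$ terms equal $p^k$, so $\Sigma(g)=p\cdot p^k=p^{k+1}$. If $i\ne p\bmod p$ then exactly one value of $r$, namely $r=-v_k/i$, gives support size $p^k-p$, and the other $p-1$ values give $p^k$; hence $\Sigma(g)=(p-1)p^k+(p^k-p)=p^{k+1}-p$. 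Taking the minimum over all $g\in T^\#$, the value $p^{k+1}-p$ is attained (by any $g$ with $i\ne p\bmod p$, which exist) and is strictly smaller than $p^{k+1}$, so $\delta_{tw}=p^{k+1}-p$. Comparing with the lower bound $p^{k+1}-p^2$ gives $\delta_{tw}-\delta_{rep}=p^2-p>0$, completing the proof.

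The main point requiring care — though it is not really an obstacle, since Lemma~\ref{fixpoint} already does the work — is the bookkeeping that for a single $g$ the "defect" $r=-v_k/i$ depends on $g$ but is always achieved by exactly one entry of the tuple $\mathcal{I}$; this is precisely what makes the twisted sum $\Sigma(g)$ lose only a single $p$ rather than the $p\cdot p = p^2$ one would lose if the defects in all $p$ repetitions could be aligned simultaneously (which is what the repetition construction effectively allows, via choosing the worst $g$ for a fixed representation). One should also note in passing that $T=\overline{G}_k\cong G_k$ has order $p^{k+1}$ (it is a split extension of the translations $V\cong\F_p^k$ by the cyclic group $\langle B_k\rangle$ of order $p$, using $B_k^p=I_k$ from Lemma~\ref{lem:bk}), so that $T^\#$ is nonempty and the minima above are over a genuine finite set; no element of $T^\#$ has $i=p\bmod p$ together with $\n\ne{\bf 0}$ forced, so elements with $i\ne p\bmod p$ certainly exist and the claimed minimum is realised.
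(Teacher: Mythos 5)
Your proposal is correct and follows essentially the same route as the paper: it derives the lower bound $p^{k+1}-p^2$ from the minimal degree $p^k-p$ (Lemma~\ref{L4}/Table~\ref{T1}) and then uses Lemma~\ref{fixpoint} to show the twisted sum equals $p^{k+1}-p$ when $i\ne p\bmod p$ and $p^{k+1}$ otherwise. The only difference is that you spell out the attainability checks and the order of $T$ explicitly, which the paper leaves implicit.
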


\begin{proof}
By Lemma \ref{2.1lem} and Table~\ref{T1}, the minimum distance of
$C(T)$ is $p^k-p$ for each $r$.
Hence the lower bound on the minimum distance of
$C(T,\mathcal{I})$ given by Proposition~\ref{lower bound}(iii),
namely the minimum distance of the
$p$-fold repetition code generated by $C(T)$, is $p^{k+1}-p^2$.
On the other hand, by Lemma \ref{fixpoint} and Table \ref{T1},
for each $g\in T$ as in \eqref{defg},
$$
\sum_{\tau_{{\bf w}_r}\tau_{{\bf w}_0}^{-1}\in \mathcal{I}}
|\supp(\tau_{{\bf w}_r}\tau_{{\bf w}_0}^{-1}(g))| =
\begin{cases} p^{k+1}-p &\mbox{if $i\neq p\,\,{\rm mod}\,p$}\\
              p^{k+1}   &\mbox{if $i=p\,\,{\rm mod}\,p$.}
\end{cases}
$$
Thus by Proposition \ref{lower bound},
$$
\delta(C(T,\mathcal{I}))= p^{k+1}-p
$$
which is strictly greater
than the lower bound in Proposition \ref{lower bound} (iii).
\end{proof}

\section{The symplectic group}\label{sec symplectic}

In this section, we consider the symplectic group $T=\Sp(4, q)$
over a field of order $q=2^n\geq 2$. We exploit the fact that G has
an outer automorphism $\tau$ which does not map transvections to
transvections. We preserve this notation throughout. Set
$\mathcal{I}=\{\iota, \tau\}$ where $\iota$ denotes the identity map. We
show that the twisted permutation code $C(T,\mathcal{I})$ has minimum
distance strictly greater than the lower bound in
Proposition \ref{lower bound}.

Let $V=\mathbb{F}^4$, the space of $4$-dimensional row vectors over a field
$\mathbb{F}$ of order $q=2^n$, and write
$$
{\rm PG}(V)=\{\langle v\rangle \ | \  v\in V \},
$$
the set of $1$-dimensional subspaces of $V$. For $g\in \GL(4,q)$, we denote by
$$
{\rm Fix}_{{\rm PG}(V)}(g)=\{\langle v\rangle \ | \ \langle
v\rangle^g=\langle v\rangle,  \langle v\rangle\in {\rm PG}(V)
\},
$$
the subset of $1$-spaces fixed setwise by
$g$. Now $V$ admits a non-degenerate symplectic form with
automorphism group $T=\Sp(4, q)$. Let $e_1, e_2, f_1, f_2$ be a
symplectic basis for $V$ such that $\{e_1, f_1\}$ and $\{e_2, f_2\}$
are hyperbolic pairs, so that
\[
V=\langle e_1,f_1\rangle\perp \langle e_2,f_2\rangle.
\]
Then $M:=\langle e_1, e_2\rangle$ is a maximal totally isotropic
subspace with ${\rm dim}M=2=({\rm dim}V)/2$.

We begin by stating without proof the following result concerning the isometry groups of non-degenerate symplectic
bilinear forms. Versions of this theorem go back to Witt's work in \cite{Chevalley} and all versions are commonly 
referred to as Witt's Theorem; the proof of the below statement may be found in the book of Artin \cite[p.121]{artin}.

\begin{theorem}\label{Witt} Let $U$ be a subspace of a non-degenerate symplectic space $V$, and let $f:U\rightarrow V$ be a linear isometry.
Then $f$ can be extended to a linear isometry of $V$, that is, there is a linear isometry $h:V\rightarrow V$ such that $f(u)=h(u)$ for all $u\in U$.
\end{theorem}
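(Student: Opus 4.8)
The plan is to reduce everything to the case of a \emph{non-degenerate} subspace of $V$, so that no induction is actually needed. Throughout, for a subspace $X\leq V$ write $X^\perp$ for the orthogonal complement with respect to the symplectic form and $\operatorname{rad}(X)=X\cap X^\perp$ for the radical of the restricted form; since $V$ is non-degenerate one has $\dim X+\dim X^\perp=\dim V$ and $X^{\perp\perp}=X$. I would first record two elementary facts, both obtained from the standard construction of a symplectic basis and requiring no appeal to Witt's Theorem: (a) any two non-degenerate symplectic spaces of the same (necessarily even) dimension over the ground field are isometric; and (b) any totally isotropic subspace $R$ of a non-degenerate symplectic space can be enlarged to a subspace $R\oplus R^*$ on which the form is non-degenerate, with $R$ and $R^*$ complementary totally isotropic subspaces (a ``hyperbolic'' subspace), and with $R$ paired perfectly with $R^*$.

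\emph{Case 1: $U$ is non-degenerate.} Then $V=U\perp U^\perp$ and $U^\perp$ is again non-degenerate. As $f$ is an isometry, $U':=f(U)$ is a non-degenerate subspace isometric to $U$, so $V=U'\perp(U')^\perp$ with $\dim(U')^\perp=\dim V-\dim U=\dim U^\perp$. By fact (a) there is an isometry $g\colon U^\perp\to(U')^\perp$, and the map $h\colon V\to V$ defined by $h|_U=f$, $h|_{U^\perp}=g$ is a linear bijection; a direct check using the decompositions $V=U\perp U^\perp$ and $V=U'\perp(U')^\perp$ shows $h$ preserves the form, so $h$ is the required extension.

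\emph{Case 2: $U$ is degenerate.} Put $R=\operatorname{rad}(U)\neq 0$ and choose a complement $W$ with $U=R\oplus W$. Since $R\subseteq U^\perp$ we get $R\perp W$, and one checks that $W$ is non-degenerate, so $U=R\perp W$ and $R$ is a totally isotropic subspace of the non-degenerate space $W^\perp$. Applying fact (b) inside $W^\perp$, enlarge $R$ to a hyperbolic subspace $R\oplus R^*$ of $W^\perp$; then $\widetilde U:=U+R^*=(R\oplus R^*)\perp W$ is non-degenerate. Carry out the same construction for the image: $U'=f(U)=R'\perp W'$ with $R'=f(R)=\operatorname{rad}(U')$ and $W'=f(W)$ non-degenerate, and fix a hyperbolic enlargement $R'\oplus(R')^*$ inside $(W')^\perp$. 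Now extend $f$ to a linear map $\widehat f\colon\widetilde U\to U'+(R')^*$ by sending a basis of $R^*$ dual to a chosen basis of $R$ onto the basis of $(R')^*$ dual to the image basis of $R'$; a short block-by-block verification of the pairings among $R$, $W$, $R^*$ shows $\widehat f$ is an isometry. Since $\widetilde U$ is non-degenerate, Case 1 applied to $\widehat f$ yields an isometry of $V$ extending $\widehat f$, hence extending $f$. (The case $\dim U=1$ is the instance $W=0$, where $\widetilde U$ is simply a hyperbolic plane.)

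\emph{Where the difficulty lies.} The only genuine obstacle is Case 2: a degenerate $U$ cannot be peeled off via $V=U\perp U^\perp$, so one must first enlarge $U$ to a non-degenerate subspace while extending $f$ in a form-preserving way. The delicate point is arranging that the pairing between $R$ and the newly adjoined $R^*$ matches the pairing between $R'$ and $(R')^*$; this is exactly what the hyperbolic-enlargement fact (b) delivers, and once it is in hand the rest is bookkeeping.
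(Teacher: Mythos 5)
The paper deliberately states this result \emph{without} proof, referring the reader to Artin, so there is no in-paper argument to compare yours against; I will judge it on its own. Your argument is the standard direct proof of Witt's extension theorem in the alternating case, and it is correct. Case~1 works exactly as you say: both cross-pairings between $U$ and $U^\perp$ (resp.\ $U'$ and $(U')^\perp$) vanish, so $h=f\oplus g$ preserves the form, and fact~(a) is available because a non-degenerate alternating space has a symplectic basis (no anisotropic part to worry about, which is precisely why the symplectic case is easier than the quadratic one). Case~2 correctly isolates the only real difficulty, and the hyperbolic enlargement of the radical inside $W^\perp$, matched on both sides via dual bases, is the right device; the block-by-block check of pairings on $R\oplus R^*\perp W$ does go through because $R^*\subseteq W^\perp$ and $(R')^*\subseteq (W')^\perp$ by construction.

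One point you should make explicit rather than leave implicit: ``linear isometry'' must mean an \emph{injective} form-preserving map. When $U$ is non-degenerate, injectivity is automatic (a vector killed by $f$ would lie in $\operatorname{rad}(U)=0$), but when $U$ is degenerate it is not: the map sending $\operatorname{rad}(U)$ to $0$ preserves the restricted form, yet cannot extend to an isometry of $V$, so the theorem is false without injectivity. With injectivity assumed (as Artin and the paper intend), your identification $f(\operatorname{rad}(U))=\operatorname{rad}(f(U))$ and the dimension counts are all valid, and the proof is complete.
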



\begin{lemma}\label{g conjugate} Let $g\in T=\Sp(4,q)$ and suppose that $\left\langle w\,|\,\langle
w\rangle\in{\rm Fix}_{PG (V)}(g)\right\rangle$ has dimension at
least $3$. Then $g$ is conjugate by an element of $T$ to an element whose
matrix, with respect to the ordered basis $(e_1, f_1, e_2,
f_2)$, has the following form
\begin{equation}\label{newg}
\left(
\begin{array}{cccc}
a&0&0&0\\
d&a^{-1}&0&0\\
0&0&b&0\\
0&0&0&b^{-1}\\
\end{array}
\right),
\end{equation}
for some $a,b,d\in\F$.
\end{lemma}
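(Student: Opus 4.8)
The plan is to exploit the hypothesis that the fixed $1$-spaces of $g$ span a subspace $W$ of dimension $\ge 3$, and to use the symplectic geometry of $V$ together with Witt's Theorem (Theorem~\ref{Witt}) to move $g$ into the stated block-diagonal form. First I would observe that, since $\dim V = 4$ and the symplectic form is non-degenerate, any subspace $W$ with $\dim W = 3$ (we may shrink $W$ to dimension exactly $3$ if it is all of $V$) has a $1$-dimensional radical $W\cap W^\perp = \langle e\rangle$; indeed $\dim W^\perp = 1$, and $W^\perp \subseteq W$ because $\dim W^\perp + \dim W = 4 < 5$. Pick such an $e$, so $\langle e\rangle$ is fixed by $g$ and lies in the radical of $W$. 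Then $e^\perp$ is a $3$-dimensional subspace containing $e$, and in fact $W = e^\perp$ (both are $3$-dimensional and $W\subseteq e^\perp$ since $e\in W^\perp$). Now $W/\langle e\rangle$ carries a non-degenerate symplectic form of dimension $2$; I would choose a complement inside $W$ giving a hyperbolic pair $\{e_2, f_2\}$ with $e_2, f_2 \in W = e^\perp$, and then extend $\{e, e_2, f_2\}$ to a symplectic basis $(e_1, f_1, e_2, f_2)$ of $V$ with $e = e_1$ (possible by Witt's Theorem, or directly by the standard symplectic basis construction). Replacing $g$ by a $T$-conjugate, we may assume the form is the standard one in these coordinates.

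Next I would extract the matrix shape. Because $W$ is spanned by fixed $1$-spaces of $g$ and $W = \langle e_1, e_2, f_2\rangle$, the action of $g$ on $\mathrm{PG}(W)$ fixes enough points to force $g$ to act as a scalar on each of $\langle e_1\rangle$, $\langle e_2\rangle$, $\langle f_2\rangle$ — more precisely, $g$ fixes these $1$-spaces, so $e_1^g = a e_1$, $e_2^g = b e_2$, $f_2^g = c f_2$ for scalars $a,b,c \in \F^\times$. Since $g$ is a symplectic isometry and $\{e_2, f_2\}$ is a hyperbolic pair, $1 = \langle e_2, f_2\rangle = \langle e_2^g, f_2^g\rangle = bc$, so $c = b^{-1}$. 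It remains to pin down the image of $f_1$. Write $f_1^g = \alpha e_1 + \beta f_1 + \gamma e_2 + \delta f_2$. From $\langle e_1, f_1\rangle = \langle e_1^g, f_1^g\rangle = a\beta$ we get $\beta = a^{-1}$; from $0 = \langle e_2, f_1\rangle = \langle e_2^g, f_1^g\rangle = b\,\langle e_2, f_1^g\rangle$ together with $\langle e_2, f_2\rangle = 1$, $\langle e_2, e_1\rangle = \langle e_2, f_1\rangle = 0$ we get $\delta = 0$; similarly pairing $f_1^g$ against $f_2$ (and using $\langle f_2, e_2\rangle = 1$, all other pairings zero) gives $\gamma = 0$. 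So $f_1^g = \alpha e_1 + a^{-1} f_1$, and setting $d := \alpha$ yields exactly the matrix \eqref{newg} with respect to $(e_1, f_1, e_2, f_2)$.

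The main obstacle I anticipate is the bookkeeping in the first paragraph: one must be careful that the radical $\langle e\rangle$ of the spanning subspace $W$ really is a $g$-invariant $1$-space (it is, since $g$ preserves both $W$ and the form, hence preserves $W\cap W^\perp$, which is $1$-dimensional and therefore pointwise scaled), and that the symplectic basis can be chosen with $e_1 = e$ and with $e_2, f_2 \in e_1^\perp$ — this is where Theorem~\ref{Witt} does the real work, extending the chosen hyperbolic pair $\{e_2, f_2\}$ in $e_1^\perp/\langle e_1\rangle$ to a full symplectic basis. A secondary subtlety is the edge case where $W = V$ (dimension $4$ rather than $3$): there $g$ fixes a spanning set of $1$-spaces of the whole space, one checks $g$ is then a scalar times an element of the claimed diagonal type, which is a degenerate instance of \eqref{newg} with $d = 0$; alternatively one simply notes that a $4$-dimensional span contains a $3$-dimensional span of fixed $1$-spaces and applies the generic argument. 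Once the basis is fixed, the matrix computation is the routine linear algebra sketched above.
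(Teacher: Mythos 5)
Your first paragraph is sound (the span $W$ of the fixed $1$-spaces has a $1$-dimensional radical $R=W\cap W^{\perp}$, which is $g$-invariant because $g$ preserves both $W$ and the form, and Witt's Theorem lets you conjugate so that $R=\langle e_1\rangle$ and $W=R^{\perp}=\langle e_1,e_2,f_2\rangle$), and your final matrix computation from form-preservation is exactly right. But there is a genuine gap in between: you choose $\{e_2,f_2\}$ as an \emph{arbitrary} hyperbolic pair complementing the radical inside $W$, and then assert that ``$W$ is spanned by fixed $1$-spaces \dots\ forces $g$ to fix $\langle e_2\rangle$ and $\langle f_2\rangle$.'' That inference is false. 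Knowing that the fixed $1$-spaces of $g$ span $W$ tells you $g|_W$ is diagonalisable, not that your chosen $e_2$ and $f_2$ are eigenvectors: for instance $g$ could fix only $\langle e_1\rangle$, $\langle e_2+f_2\rangle$ and $\langle e_2+\lambda f_2\rangle$ (three spanning $1$-spaces with distinct eigenvalues), in which case neither $\langle e_2\rangle$ nor $\langle f_2\rangle$ is fixed. This is precisely where the paper does its remaining work: it applies Witt's Theorem twice more to conjugate $g$ further, first showing that $\operatorname{Stab}_T(W)$ is transitive on the $1$-spaces of $W\setminus R$ (via isometries of the totally isotropic planes $\langle e_1,w_i\rangle$) so that a genuine fixed $1$-space can be moved onto $\langle e_2\rangle$, and then extending the isometry $e_1\mapsto e_1$, $e_2\mapsto e_2$, $f_2\mapsto w$ (where $\langle w\rangle$ is a third fixed $1$-space outside $\langle e_1,e_2\rangle$, which exists because the fixed $1$-spaces span the $3$-dimensional $W$) so as to move that third fixed $1$-space onto $\langle f_2\rangle$.

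The gap is repairable along lines close to your own setup: instead of picking $e_2,f_2$ arbitrarily, pick two fixed $1$-spaces $\langle u_1\rangle,\langle u_2\rangle$ whose images in $W/R$ are independent; then $(u_1,u_2)\neq 0$, so after scaling $\{u_1,u_2\}$ is a hyperbolic pair of fixed $1$-spaces, and the change of basis taking $(e_1,f_1,e_2,f_2)$ to a symplectic basis built from $e_1, u_1,u_2$ is an element of $T$. Either that repair or the paper's sequence of conjugations is needed; as written, your proof does not establish that $g$ fixes $\langle e_2\rangle$ and $\langle f_2\rangle$, which is the hypothesis on which your entire second paragraph rests. (A minor further quibble: your justification that $W^{\perp}\subseteq W$ via ``$\dim W^{\perp}+\dim W=4<5$'' is not a proof; the correct reason, used in the paper, is that an odd-dimensional subspace of a symplectic space cannot be non-degenerate.)
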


\begin{proof}
By assumption $V$ contains three linearly independent vectors
$v_1, v_2, v_3$ such that $g$ fixes each $\langle v_i\rangle$
setwise, and thus g fixes setwise $W:=\langle v_1, v_2, v_3
\rangle$. Since $W$ is of odd dimension, its radical
$R:=W\cap W^\perp$ (where $W^\perp=\{w\in W\,|\,(w,u)=0\textnormal{ for all $u\in W$}\}$) must be non-zero, and also $R< W$,
because the maximum dimension of the totally isotropic subspaces of
$V$ is equal to 2. Thus $W/R$ is nontrivial and non-degenerate,
and hence $\dim (W/R)=2$ and $\dim R=1$.
Further, since $R\subseteq W^\perp$,
and since $V$ is non-degenerate, we have
$W=R^\perp$. Now there is a linear isometry $R\rightarrow V$
which maps $R$ to $\langle e_1\rangle$, and
by Theorem \ref{Witt} this extends to an element
of $T$ mapping $R$ to $\langle e_1\rangle$. Replacing $g$, if necessary,
by its conjugate under this element, we may assume that
$R=\langle e_1\rangle$ and hence that $W=R^\perp =\langle e_1,e_2, f_2\rangle$.

We claim that the setwise stabiliser ${\rm Stab}_T(W)$
is transitive on the $1$-spaces contained in $W\setminus R$.
To see this, let $w_1$ and $w_2$
be linearly independent vectors in $W\setminus  R$. Then $U_i:=\langle e_1,
w_i\rangle$ is totally isotropic for $i=1, 2$. There is a unique linear map
$f:U_1\rightarrow V$ which sends
\[
f: e_1 \rightarrow  e_1 ,  \ \   w_1 \rightarrow  w_2,
\]
and this map $f$ is a linear isometry, because $U_i$ is totally isotropic for $i=1,2$.
 Therefore Witt's Theorem
implies that there exists $y$ in $T$ which sends
\[
y: e_1 \rightarrow e_1 ,  \ \   w_1 \rightarrow w_2.
\]
Now $y$ fixes $\langle e_1\rangle$, and hence also $\langle
e_1\rangle^{\perp }=W$, so $y\in {\rm Stab}_T(W)$ and the
claim is established.

In what follows, we may assume without loss that $v_1\in
W\setminus R$. By the previous paragraph, there exists an element
$x\in {\rm Stab}_T(W)$ such that $x$ sends $v_1\rightarrow e_2$
and $e_1\rightarrow e_1$. So $g^x=x^{-1}gx$ fixes $\langle
e_2\rangle$ setwise. Hence $g^x$ fixes $\langle e_1\rangle$,
$\langle e_2\rangle$, and $W$.
Since $g^x$ is a conjugate of our original element $g$,
there exists a vector $w\in W\setminus \langle e_1, e_2\rangle$ such that
$\langle w\rangle$ is fixed by $g^x$. Replacing $w$ by a scalar
multiple if necessary, we may assume that $w=ce_1+de_2+f_2$,
for some $c, d\in\mathbb{F}$.
%
Now $W=\langle e_1, e_2, f_2\rangle=\langle e_1, e_2,
w\rangle$ and it is straightforward to check that the map $\phi :W\rightarrow
V$ which sends
\[
\phi : e_1 \rightarrow  e_1 ,  \ \   e_2 \rightarrow  e_2, \ \
f_2 \rightarrow  w,
\]
defines a linear isometry. Hence by Witt's Theorem, there exists an
element say $y\in T$ such that
\[
y: e_1 \rightarrow  e_1 ,  \ \   e_2 \rightarrow  e_2, \ \ f_2
\rightarrow  w.
\]
Then $g^{xy^{-1}}=yg^xy^{-1}$ fixes $\langle e_1\rangle$, $\langle
e_2\rangle$, and $\langle f_2\rangle$. Now we replace $g$ by
its conjugate $g^{xy^{-1}}$. Then $g$ fixes $W$,
$\langle e_1\rangle$, $\langle e_2\rangle$, and $\langle
f_2\rangle$, and so, for some $a, b, c'$ and $d_i$,
\[
g: e_1 \rightarrow  ae_1 ,  \ \   e_2 \rightarrow  be_2, \ \ f_2
\rightarrow  c'f_2, \ \ f_1\rightarrow
d_1e_1+d_2f_1+d_3e_2+d_4f_2.
\]
Since $g$ preserves the form and $(e_2, f_2)=1$, $(e_1, f_1)=1$, $(e_2,
f_1)=0$, and $(f_1, f_2)=0$, we obtain $c'=b^{-1}$, $d_2=a^{-1}$,
$d_4=0$, and $d_3=0$. This shows that the matrix for $g$ with
respect to the ordered basis $(e_1, f_1, e_2, f_2)$ is as in \eqref{newg}.
\end{proof}

\begin{lemma}\label{transvection} Suppose that $q$ is even and that
$g\in \Sp(4, q)$ with $g\ne1$.
If $g$ is a transvection,
then $g$ has exactly $q^2+q+1$ fixed $1$-spaces in $V$; otherwise
it has at most $2q+2$ fixed $1$-spaces in $V$.
\end{lemma}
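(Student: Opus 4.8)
The plan is to split into two cases according to whether $g$ is a transvection, and to count fixed $1$-spaces by linear-algebraic means. Recall that a transvection in $\Sp(4,q)$ (with $q$ even) has the form $v\mapsto v+\phi(v)u$ for a fixed nonzero vector $u$ and a linear functional $\phi$ with $\phi(u)=0$, and that for symplectic transvections $\phi(v)=\lambda(v,u)$ for some nonzero scalar $\lambda$. For such a $g$, a $1$-space $\langle v\rangle$ is fixed if and only if $g(v)\in\langle v\rangle$; writing $g(v)=v+\lambda(v,u)u$, this holds precisely when $(v,u)u\in\langle v\rangle$, i.e. when either $(v,u)=0$ or $v\in\langle u\rangle$. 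Since $\langle u\rangle\subseteq u^\perp$ (as $(u,u)=0$), the fixed $1$-spaces are exactly those contained in the hyperplane $u^\perp$, which has dimension $3$. The number of $1$-spaces in a $3$-dimensional space over $\F_q$ is $(q^3-1)/(q-1)=q^2+q+1$, giving the first assertion.

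For the second case, suppose $g$ is not a transvection. Here I would use Lemma~\ref{g conjugate}: if the span of the fixed $1$-spaces has dimension at least $3$, then up to $T$-conjugacy $g$ has the block-diagonal shape \eqref{newg}, and since conjugation does not change the number of fixed $1$-spaces it suffices to count for that matrix. One computes directly which $\langle v\rangle$ with $v=(x_1,x_2,x_3,x_4)$ (in the basis $(e_1,f_1,e_2,f_2)$) satisfy $g(v)\in\langle v\rangle$: the fixed $1$-spaces are those lying in eigenspaces of $g$, so the count is governed by the multiset of eigenvalues $\{a,a^{-1},b,b^{-1}\}$ together with the possible nonzero value of $d$. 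A careful enumeration of the subcases (how many of $a,a^{-1},b,b^{-1}$ coincide, and whether $d=0$) shows the total is at most $2(q+1)=2q+2$, the extremal configuration being when $V$ is a direct sum of two $g$-invariant $2$-spaces each contributing $q+1$ fixed $1$-spaces. The remaining possibility is that the fixed $1$-spaces of $g$ span a subspace of dimension at most $2$; then they all lie in a single $2$-space, so there are at most $q+1\le 2q+2$ of them.

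The main obstacle is the case analysis in the second part: one must verify that the block-diagonal form \eqref{newg} never has more than $2q+2$ fixed $1$-spaces, and in particular rule out configurations that would give more (for instance one must check what happens when $a=b$ or $a=a^{-1}$, i.e. $a=1$, where extra fixed spaces could appear, and confirm that the presence of a nonzero $d$ in the first block genuinely cuts the first block's contribution down from $q+1$ to $1$). I would handle this by listing the eigenvalue-coincidence patterns, computing the dimension of each eigenspace (remembering that over a field of characteristic $2$ the Jordan block $\left(\begin{smallmatrix}a&0\\d&a^{-1}\end{smallmatrix}\right)$ with $a=a^{-1}=1$ and $d\ne0$ is a single nontrivial Jordan block with a one-dimensional fixed space), and then summing $(q^{\dim}-1)/(q-1)$ over the distinct eigenspaces. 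The bound $2q+2$ then drops out as the maximum over all patterns, with the only subtlety being to confirm that whenever the bound $q+1$ per block is attained in both blocks simultaneously, $g$ is forced to be semisimple and non-transvection, consistently with our hypothesis.
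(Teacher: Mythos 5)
Your argument follows the paper's proof in its essentials: the dichotomy on whether the fixed $1$-spaces span a subspace of dimension at least $3$, the reduction via Lemma~\ref{g conjugate} to the matrix form \eqref{newg}, and the case analysis on the coincidence pattern of $a,a^{-1},b,b^{-1}$ together with whether $d=0$ are exactly the paper's steps, and you correctly flag the two delicate points (the unipotent Jordan block when $a=a^{-1}=1$, $d\neq 0$ contributing only a one-dimensional fixed space, and the extremal two-block configuration attaining $2q+2$). The only difference is minor: you count the transvection's fixed $1$-spaces directly as the $1$-spaces of the hyperplane $u^{\perp}$, whereas the paper obtains the count $q^{2}+q+1$ from the same case analysis (the subcase $|S|=1$, $d\neq 0$) and cites Taylor to identify that matrix as a transvection.
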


\begin{proof}
It is not difficult to see that if a non-scalar element $g\in
\GL(4,q)$ fixes $\langle v_1, v_2\rangle$ setwise and $g$ fixes
$\langle v_1\rangle$ and $\langle v_2\rangle$, then the number of
$1$-spaces in $\langle v_1, v_2\rangle$ fixed setwise by $g$ is
$2$ or $q+1$. Thus if all the $1$-spaces fixed setwise by $g$ lie
in a $2$-space, then there is nothing to prove.

So we assume that $g$ fixes setwise three $1$-spaces $\langle
v_1\rangle$, $\langle v_2\rangle$, and $\langle v_3\rangle$
such that $v_1, v_2, v_3$ are linearly independent. Then $g$ fixes
$W := \langle v_1, v_2, v_3\rangle$ setwise, so, by Lemma \ref{g
conjugate}, conjugating $g$ by an element of $\Sp(4,q)$ if
necessary, we may assume that the matrix for $g$ with respect to
the ordered basis $(e_1, f_1, e_2, f_2)$ is as in \eqref{newg}.

Let $S=\{a, a^{-1}, b, b^{-1}\}$. Let
$v=xe_1+yf_1+ze_2+wf_2\in W\setminus\{0\}$,
and suppose that $g$ fixes $\langle v\rangle$
setwise, so $vg=tv$ for some $t\in \mathbb{F}\setminus\{0\}$.
Then
\begin{align*}
ax+yd &=tx,\\
a^{-1}y &=ty,\\
bz &=tz,\\
b^{-1}w &=tw.
\end{align*}
We find ${\rm Fix}_{{\rm PG}(V)}(g)$ and its
size according to the possibilities for $|S|$ and $d$.
Assume first that $d=0$. In this case $|S|\geq 2$ since
$g\ne 1$ and the only scalar matrix in $\Sp(4,q)$ is the identity.

\begin{itemize}
\item[{(1)}] If $|S|=4$ (i.e., $a$, $a^{-1}$, $b$, $b^{-1}$ are all distinct), then
${\rm Fix}_{{\rm PG}(V)}(g)=\{\langle e_1\rangle, \langle
f_1\rangle, \langle e_2\rangle, \langle f_2\rangle\}$, and so
$|{\rm Fix}_{{\rm PG}(V)}(g)|=4< 2q+2$ (and $g$ is not a transvection).

\item[{(2)}] If $|S|=3$, then either $a=a^{-1}$ or $b=b^{-1}$ and we find
\[
{\rm Fix}_{{\rm PG}(V)}(g)=\left\{\begin{array}{lll} \Big\{\langle
xe_1+yf_1\rangle, \langle e_2\rangle, \langle f_2\rangle \ | \ x,
y\in \mathbb{F}, (x, y)\neq (0, 0) \Big\} & \mbox{if}  &
a=a^{-1},\\[0.3cm]
\Big\{\langle e_1\rangle, \langle f_1\rangle, \langle
ze_2+wf_2\rangle | z, w\in \mathbb{F}, (z,w )\ne (0, 0)\Big\} &
\mbox{if} & b=b^{-1}.\end{array}\right.
\]
In both cases, we see
that $|{\rm Fix}_{{\rm PG}(V)}(g)|=q+3\leq 2q+2$ (and $g$ is not a transvection).

\item[{(3)}] If $|S|=2$, then

\[ {\rm Fix}_{{\rm PG}(V)}(g)=\left\{\begin{array}{lll}
\Big\{\langle xe_1+yf_1\rangle, \langle ze_2+wf_2\rangle | x, y,
z, w\in \mathbb{F}, (x,y),(z,w)\ne (0,0 ) \Big\} & \mbox{if}  &
a=a^{-1}, b=b^{-1},\\[0.3cm]
\Big\{\langle xe_1+ze_2\rangle, \langle yf_1+wf_2\rangle | x, y,
z, w\in \mathbb{F}, (x,z),(y,w)\ne (0,0 )\Big\} & \mbox{if} &
a^{-1}\ne a=b,\\[0.3cm]
\Big\{\langle xe_1+wf_2\rangle, \langle yf_1+ze_2\rangle | x, y,
z, w\in \mathbb{F}, (x,w),(y,z)\ne (0,0 )\Big\} & \mbox{if} &
a^{-1}\ne a= b^{-1}.\end{array}\right.
\]
In all cases, we have $|{\rm
Fix}_{{\rm PG}(V)}(g)|=2q+2< 1+q+q^2$ (and $g$ is not a transvection).
\end{itemize}

Now we assume that $d\neq 0$.

\begin{itemize}
\item[{(1)}] If $|S|=4$ (i.e., $a$, $a^{-1}$, $b$, $b^{-1}$ are all distinct), then
\[
{\rm Fix}_{{\rm PG}(V)}(g)=\Big\{\langle e_1\rangle, \langle
\frac{d_1}{(a^{-1}-a)}e_1+f_1\rangle, \langle e_2\rangle, \langle
f_2\rangle\Big\},
\]
and $|{\rm
Fix}_{{\rm PG}(V)}(g)|=4< 2q+2$ (and $g$ is not a transvection).

\item[{(2)}] If $|S|=3$, then either $a=a^{-1}$ or $b=b^{-1}$ and we find
\[
{\rm Fix}_{{\rm PG}(V)}(g)= \left\{\begin{array}{lll}
\{\langle e_1\rangle, \langle e_2\rangle, \langle f_2\rangle\} & \mbox{if} & a=a^{-1}\\
\Big\{\langle e_1\rangle, \langle
\frac{d_1}{(a^{-1}-a)}e_1+f_1\rangle, \langle ze_2+wf_2\rangle |
z, w\in \mathbb{F}, (z, w)\ne (0,0) \Big\}  & \mbox{if} & b=b^{-1}\\
\end{array}\right.
\]
and $|{\rm Fix}_{{\rm PG}(V)}(g)|=3$ or $q+3$ respectively, so is less that
$2q+2$ (and $g$ is not a transvection).

\item[{(3)}]
If $|S|=2$, then
\[{\rm Fix}_{{\rm PG}(V)}(g)=
\Big\{\langle e_1\rangle, \langle ze_2+wf_2\rangle | z, w\in
\mathbb{F}, (z, w)\ne (0, 0)\Big\}, \] if $a=a^{-1}$ and
$b=b^{-1}$, and hence in this case  $|{\rm Fix}_{{\rm
PG}(V)}(g)|=q+2$. Similarly, we obtain \small{
\[ {\rm Fix}_{{\rm
PG}(V)}(g)=\left\{\begin{array}{lll} \Big\{\langle
xe_1+ze_2\rangle, \langle
\frac{d_1y}{(a^{-1}-a)}e_1+yf_1+wf_2\rangle | x,y,z,w\in
\mathbb{F}, (x,z), (y, w)\ne(0,0)\Big\} & \mbox{if} &
a=b,\\[0.3cm]
\Big\{\langle xe_1+wf_2\rangle, \langle
\frac{d_1y}{(a^{-1}-a)}e_1+yf_1+ze_2\rangle| x,y,z,w\in
\mathbb{F}, (x,w), (y, z)\ne(0,0)\Big\} & \mbox{if} & a=
b^{-1}.\end{array}\right.
\] }
and in these two cases $|{\rm Fix}_{{\rm PG}(V)}(g)|=2q+2$
 (and $g$ is not a transvection).

\item[{(4)}] If $|S|=1$, then $a=b=a^{-1}$ and $g$ has determinant
$a^4=1$, so $a=1$ (since $q$ is even). Then
\[
{\rm Fix}_{{\rm PG}(V)}(g)=\Big\{\langle xe_1+ze_2+wf_2\rangle |
x,z,w\in \mathbb{F}, (x, z, w)\neq (0,0,0)\Big\},
\]
and its size is equal to $q^2+q+1$. This is the only case in which
the number of $1$-spaces fixed setwise by $g$ is more that
$2q+2$, and here $g$ is conjugate to a transvection (see \cite[Chapter 4]{Taylor}):
\[
\left(
\begin{array}{cccc}
1&0&0&0\\
d&1&0&0\\
0&0&1&0\\
0&0&0&1\\
\end{array}
\right)
\]
\end{itemize}
\end{proof}

\subsection{An outer automorphism of $\Sp(4, q)$ where $q=2^n$}
The following lemma is taken from
\cite{Taylor}.
\begin{lemma}\label{t11-9-new}
The following conditions hold:
\begin{itemize}
\item[$(i)$] $O(2m+1, q)$ is isomorphic to $\Sp(2m, q)$ {\rm (see
\label{T11.9}\cite[Theorem 11.9]{Taylor})}.

\item[$(ii)$] Every transvection in $O(2m+1, q)$ corresponds to a transvection in
$\Sp(2m, q)$ {\rm  (see \cite[p. 144]{Taylor}}.

\item[$(iii)$] $\PSp(4, q)\simeq P\Omega(5, q)$, for all prime powers $q$ {\rm  (see \label{C12.32}\cite[Corollary 12.32]{Taylor})}.
\end{itemize}
\end{lemma}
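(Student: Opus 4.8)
All three facts are classical, and as the paper notes they may simply be quoted from \cite{Taylor}; still, here is the route I would take. The plan for $(i)$ is to exploit the behaviour of quadratic forms in characteristic $2$. Let $q$ be even, let $Q$ be a non-degenerate quadratic form on a $(2m+1)$-dimensional $\F_q$-space $W$, and let $b(u,v)=Q(u+v)+Q(u)+Q(v)$ be the associated bilinear form. Since $q$ is even $b$ is alternating, and because $\dim W$ is odd its radical is forced to be $1$-dimensional, say $\langle z\rangle$, with $Q(z)\neq 0$. The form induced by $b$ on $\overline W:=W/\langle z\rangle$ is then a non-degenerate alternating form, so $(\overline W,\overline b)$ is a $2m$-dimensional symplectic space. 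Every $g\in O(2m+1,q)$ preserves $b$, hence fixes $\langle z\rangle$, and so induces $\overline g\in\Sp(\overline W)$; I would then verify that $g\mapsto\overline g$ is an isomorphism $O(2m+1,q)\to\Sp(2m,q)$, with injectivity coming from the fact that $Q$ recovers the action of $g$ on $W$ from the action of $\overline g$ on $\overline W$ together with $g(z)=z$, and surjectivity following from an order comparison. This is \cite[Theorem 11.9]{Taylor}.

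For $(ii)$ the plan is to push transvections through the isomorphism of $(i)$. A symplectic transvection of $\Sp(\overline W)$ has the form $\overline x\mapsto\overline x+\overline b(\overline x,\overline v)\,\overline v$ for some nonzero $\overline v$; choosing a lift $v\in W$ of $\overline v$ with $Q(v)=0$, the orthogonal transvection $x\mapsto x+b(x,v)v$ lies in $O(2m+1,q)$ (this is where $Q(v)=0$ is used) and maps onto the given symplectic transvection, and conversely every orthogonal transvection arises this way. Hence the isomorphism of $(i)$ restricts to a bijection between the sets of transvections, which is exactly what is recorded at \cite[p.~144]{Taylor}.

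For $(iii)$ the plan is to realise the exceptional isomorphism $B_2\cong C_2$ concretely. Take $V=\F_q^4$ with its symplectic form $\omega\in\wedge^2V$, and consider the $5$-dimensional module $\wedge^2V/\langle\omega\rangle$ on which $\Sp(4,q)$ acts through its (trivial) action on $\wedge^4V\cong\F_q$; this action preserves a non-degenerate quadratic form, yielding an embedding $\PSp(4,q)\hookrightarrow P\Omega(5,q)$ which is an isomorphism because the two groups have equal order. When $q$ is even one can instead read this off from $(i)$ with $m=2$ after passing to the appropriate central quotients, so the statement holds for every prime power $q$; this is \cite[Corollary 12.32]{Taylor}.

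The only genuinely delicate point is the characteristic-$2$ bookkeeping behind $(i)$ and $(ii)$: one must check that the radical of $b$ is exactly one-dimensional, and, more subtly, that an element of $O(2m+1,q)$ is pinned down by its image in $\Sp(2m,q)$, which uses the quadratic form $Q$ in an essential way and not merely the bilinear form $b$. The remaining ingredients --- the order comparison in $(iii)$ and the transvection correspondence in $(ii)$ --- are routine, and for the purposes of this paper it suffices to quote \cite{Taylor}.
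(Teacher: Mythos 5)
The paper offers no proof of this lemma at all: it is quoted directly from Taylor's book, with the three parts referenced to \cite[Theorem 11.9]{Taylor}, \cite[p.~144]{Taylor} and \cite[Corollary 12.32]{Taylor}. So your decision to treat these as quotable classical facts is exactly what the paper does, and your sketches of $(i)$ and $(iii)$ (radical of the polar form is one-dimensional in odd dimension over a field of even order, injectivity of $g\mapsto\overline g$ via the quadratic form, the $5$-dimensional constituent of $\Lambda^2V$ plus an order count) follow the standard route. Note only that $(i)$ and $(ii)$ are specific to even $q$, which is the only case the paper uses.

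There is one step in your sketch of $(ii)$ that would fail as written. In characteristic $2$ an orthogonal transvection has an \emph{anisotropic} direction vector, $Q(v)\neq 0$, and is the map $x\mapsto x+Q(v)^{-1}b(x,v)\,v$; your map $x\mapsto x+b(x,v)v$ with $Q(v)=0$ does not preserve $Q$, since $Q\bigl(x+b(x,v)v\bigr)=Q(x)+b(x,v)^2Q(v)+b(x,v)^2=Q(x)+b(x,v)^2$ when $Q(v)=0$. So it lies in the symplectic group of $(W,b)$ but not in $O(W,Q)$, and the claimed lift does not exist with that normalisation. The correct bookkeeping is the opposite one: given a symplectic transvection $\overline x\mapsto\overline x+a\,\overline b(\overline x,\overline v)\,\overline v$ with $a\neq 0$, choose the lift $v+\lambda z$ of $\overline v$ with $Q(v+\lambda z)=Q(v)+\lambda^{2}Q(z)=a^{-1}$, which is possible because $\lambda\mapsto\lambda^{2}Q(z)$ is a bijection of $\F_q$ for $q$ even and $Q(z)\neq 0$; the orthogonal transvection in that direction then induces the given symplectic transvection, and conversely. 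With this correction your outline of $(ii)$, and the rest of the proposal, is sound.
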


As pointed out by Todd \cite{todd}, using Lemma \ref{T11.9} one can
obtain some isomorphisms between $\Sp(4, 2^n)$ and $O(5, 2^n)$.
However the geometric reasons for these isomorphisms are quite
different. Taylor \cite[p. 201]{Taylor} observes that the exterior
square $\Lambda^2V$ of $V=\mathbb{F}^4$ has dimension $4$ and admits a
nondegenerate alternating form which is invariant under
$\Sp(4,q)$. Moreover each element $g$ of $\Sp(4,q)$ induces a
linear map of $\Lambda^2V$, which we denote by $\Lambda^2g$,
giving a second representation of $\Sp(4,q)$ in dimension $4$.

Let $t$ be a transvection in $\Sp(4,q)$. Then Taylor shows
\cite[p. 202]{Taylor} that the map $1-\Lambda^2t$ of $\Lambda^2V$
has image of dimension $2$, and hence also
$$
{\rm Fix}_{\Lambda^2 V}(\Lambda^2t) = {\rm Ker}_{\Lambda^2 V} (1-\Lambda^2 t),
$$
is a
$2$-dimensional subspace of $\Lambda^2V$. Since $\Lambda^2t$ is
unipotent, each $1$-space fixed setwise by $\Lambda^2t$ must be fixed pointwise,
and hence must lie in ${\rm Fix}_{\Lambda^2 V}(\Lambda^2t)$. Thus $\Lambda^2t$
fixes setwise exactly $q+1$ of the 1-spaces in $\Lambda^2V$, and hence it does not
act as a transvection on $\Lambda^2V$.

Taylor composes the representation $\Lambda^2$ with a linear map
$\Lambda^2V \rightarrow V$ to obtain an outer automorphism $\tau$
of $\Sp(4,q)$ under which $t$ is mapped to an element $t \tau$ of
$\Sp(4,q)$ with exactly $q+1$ fixed 1-spaces in $V$. Thus we have
the following result.

\begin{lemma}\label{l:trans} There exists an outer automorphism $\tau$ of $\Sp(4,q)$
under which the image of a transvection is an element with $q+1$
fixed $1$-spaces.
\end{lemma}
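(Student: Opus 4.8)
The plan is to carry out the construction outlined in the paragraphs above, following Taylor \cite{Taylor}: build the automorphism $\tau$ from the exterior‑square representation, and then read off the fixed‑$1$‑space count of the image of a transvection from the unipotence of that image. A preliminary reduction: over a field of characteristic $2$ every scalar is a square, so all symplectic transvections of $\Sp(4,q)$ are conjugate; hence it is enough to exhibit one outer automorphism $\tau$ and to compute the number of fixed $1$‑spaces of $t\tau$ for a single transvection $t$, after which conjugation yields the statement for every transvection.

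First I would record the second $4$‑dimensional representation. As on \cite[p.\ 201]{Taylor}, the induced action $g\mapsto\Lambda^2g$ of $\Sp(4,q)$ on (the relevant $4$‑dimensional subquotient of) $\Lambda^2V$ preserves a nondegenerate alternating form, so $\Lambda^2$ maps $\Sp(4,q)$ into the isometry group of that form, which is conjugate in $\GL(\Lambda^2V)$ to $\Sp(4,q)$. Since all nondegenerate alternating forms on a $4$‑dimensional space over $\F$ are equivalent, we may fix a linear isometry $\ell\colon\Lambda^2V\to V$ onto the original symplectic space and set $\tau(g):=\ell^{-1}\circ\Lambda^2g\circ\ell$. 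A one‑line check (the conjugate by an isometry of a form‑preserving map preserves the transported form) shows $\tau(g)\in\Sp(4,q)$ for all $g$, and $\tau$ is a homomorphism; it is injective because $\Lambda^2$ is faithful on $\Sp(4,q)$ (for $q\geq4$ this is immediate since $\Sp(4,q)=\PSp(4,q)$ is simple, and it holds also for $q=2$), hence bijective by comparing orders, so $\tau\in\Aut(\Sp(4,q))$.

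Next I would compute the fixed $1$‑spaces of $t\tau$ for a transvection $t$. A direct matrix computation, or \cite[p.\ 202]{Taylor}, shows that $1-\Lambda^2t$ has image of dimension $2$ on the $4$‑dimensional module, so its kernel is a $2$‑dimensional subspace; since $t$ is unipotent so is $\Lambda^2t$, whence this kernel equals the space of vectors fixed by $\Lambda^2t$. Again because $\Lambda^2t$ is unipotent, any $1$‑space fixed setwise by $\Lambda^2t$ is fixed pointwise, so lies in this plane, and conversely every $1$‑space of the plane is fixed; thus $\Lambda^2t$ fixes exactly its $q+1$ one‑spaces, and transporting through $\ell$ we conclude that $t\tau$ fixes exactly $q+1$ one‑spaces of $V$. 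Finally, $\tau$ is outer: by Lemma \ref{transvection} a transvection of $\Sp(4,q)$ fixes $q^2+q+1>q+1$ one‑spaces of $V$, so $t\tau$ is not a transvection, while inner automorphisms permute the transvections among themselves; hence $\tau\notin\Inn(\Sp(4,q))$.

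The main obstacle is the input from Taylor used in the first of these steps: verifying that, in characteristic $2$, the exterior square really does produce a \emph{faithful} action of $\Sp(4,q)$ on a $4$‑dimensional space carrying an invariant \emph{nondegenerate alternating} form (rather than a proper subgroup, or a form of a different type), so that the composite $\tau$ is genuinely an automorphism of $\Sp(4,q)$. Once this structural fact is in hand, everything else — the rank‑$2$ computation for $1-\Lambda^2t$, the unipotence arguments, the $q+1$ count, and the comparison with Lemma \ref{transvection} — is routine.
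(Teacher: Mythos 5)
Your proposal follows essentially the same route as the paper: both rely on Taylor's exterior-square construction of the outer automorphism and then count the fixed $1$-spaces of $\tau(t)$ via the unipotence of $\Lambda^2 t$ and the rank-$2$ computation for $1-\Lambda^2 t$. Your additional remarks (that all transvections are conjugate in characteristic $2$, and that $\tau$ is outer because it fails to preserve the transvection class, by comparison with Lemma~\ref{transvection}) are correct refinements of the same argument, which in the paper is left implicit in the citation of Taylor.
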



This is the information we need to analyse the twisted permutation code $C(T,\mathcal{I})$
for $T=\Sp(4, q)$ relative to $\mathcal{I}=(\iota, \tau)$,
with $\iota$ the identity automorphism of $T$ and $\tau$ the outer automorphism from Lemma $\ref{l:trans}$.
To construct this code, we let $V=\mathbb{F}^4$, where $\mathbb{F}$ is
a field of order $q$, we set  $m:=(q^4-1)/(q-1)$, and we choose an ordering
$(\langle v_1\rangle, \langle v_2\rangle, \ldots , \langle v_m\rangle)$
for the set ${\rm PG}(V)$  of $1$-dimensional
subspaces in $V$.  Each $g\in T$ then corresponds to the vertex
$$
\alpha(g)=\left(\langle v_1\rangle^g, \langle v_2\rangle^g,
\ldots , \langle v_m\rangle^g\right)
$$
of the Hamming graph of length $m$ over ${\rm
PG}(V)$.
If we identify $T$ with the permutation group it induces on ${\rm PG}(V)$
then $\iota$ and $\tau$ can be interpreted as permutation representations of $T$
on ${\rm PG}(V)$.
We define the permutation code as
$$
C(T)=\{\alpha(g) | g\in T\}
$$
and the twisted permutation code $C(T,\mathcal{I})$
relative to $\mathcal{I}=(\iota, \tau)$ as the code in the
Hamming graph of length $2m = 2q^3+2q^2+2q+2$ over ${\rm
PG}(V)$ consisting of the vertices
$$
\alpha(g,\mathcal{I})=(\alpha(g\iota),\alpha(g\tau))\quad \mbox{for $g\in T$.}
$$

\begin{proposition}\label{symplectic}
Let $T=\Sp(4, q)$ and $\mathcal{I}=(\iota, \tau)$ be as above.
Then the twisted permutation
code $C(T,\mathcal{I})$ has
minimum distance $2q^3+q^2$, and the difference between this
minimum distance and the lower bound given by Proposition
$\ref{lower bound}$ is $q^2$.
\end{proposition}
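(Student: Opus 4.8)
The plan is to compute both the minimum distance of $C(T,\mathcal{I})$ and the lower bound of Proposition~\ref{lower bound}(iii) directly, using the fixed-$1$-space counts established in Lemmas~\ref{transvection} and \ref{l:trans}. The key translation (exactly as in Lemma~\ref{2.1lem}) is that for any permutation representation $\sigma$ of $T$ on ${\rm PG}(V)$ and any $g\in T$, the support of $\sigma(g)$ on ${\rm PG}(V)$ has size $m-|{\rm Fix}_{{\rm PG}(V)}(\sigma(g))|$, where $m=q^3+q^2+q+1$. Since $C(T,\mathcal{I})$ is distance invariant by Proposition~\ref{lower bound}(i), its minimum distance is $\min_{g\in T^\#}\bigl(|\supp(g\iota)|+|\supp(g\tau)|\bigr)$, and the repetition lower bound is the smaller of the two minimal degrees $\min_{g\in T^\#}|\supp(g\iota)|$ and $\min_{g\in T^\#}|\supp(g\tau)|$, doubled.

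First I would compute the minimal degree of $T$ in its natural action on ${\rm PG}(V)$: by Lemma~\ref{transvection}, the elements of $\Sp(4,q)$ fixing the most $1$-spaces are precisely the transvections, with $q^2+q+1$ fixed $1$-spaces, so $|\supp(g\iota)|\geq m-(q^2+q+1)=q^3$ for all $g\neq 1$, with equality exactly when $g$ is a transvection. Next I would analyse the twisted representation $\tau$. Because $\tau$ is an automorphism of $T$, the action $g\mapsto g\tau$ is again the natural action precomposed with $\tau$, hence permutationally isomorphic to the natural action; in particular $\tau$ permutes conjugacy classes, and for $g$ \emph{not} a transvection, $g\tau$ is also not a transvection, so $|{\rm Fix}_{{\rm PG}(V)}(g\tau)|\leq 2q+2$ and $|\supp(g\tau)|\geq m-(2q+2)=q^3+q^2-q-1$. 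For $g$ a transvection, Lemma~\ref{l:trans} gives $|{\rm Fix}_{{\rm PG}(V)}(g\tau)|=q+1$, so $|\supp(g\tau)|=m-(q+1)=q^3+q^2$. Thus the minimal degree of $\tau(T)$ is also $q^3$ (realised by non-transvections when $q^3\leq q^3+q^2-q-1$, i.e.\ always for $q\geq 2$; and in any case bounded below by $q^3$), so $\delta_{\rep}=\min\{q^3,\,q^3\}\cdot 2 = 2q^3$ — more carefully, the repetition code on $\iota$ has minimum distance $2q^3$, so the lower bound of Proposition~\ref{lower bound}(iii) is $2q^3$.

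Now for $\delta_{tw}$: I would split into cases according to whether $g$ is a transvection. If $g$ is a transvection, then $|\supp(g\iota)|+|\supp(g\tau)| = q^3 + (q^3+q^2) = 2q^3+q^2$. If $g$ is \emph{not} a transvection, then $|\supp(g\iota)|\geq q^3+1$ (since only transvections attain $q^3$) and $|\supp(g\tau)|\geq q^3+q^2-q-1$ (non-transvections fix at most $2q+2$ $1$-spaces, and this bound also applies to $g\tau$ since $g\tau$ is not a transvection); I must then check $q^3+1+q^3+q^2-q-1 = 2q^3+q^2-q \geq 2q^3+q^2$, which fails by $q$. So this crude bound is not enough, and here lies the main obstacle: I need a sharper statement that a non-transvection $g$ cannot have \emph{both} $|\supp(g\iota)|$ small \emph{and} $|\supp(g\tau)|$ small simultaneously. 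The resolution should be that if $g$ is not a transvection then $|{\rm Fix}_{{\rm PG}(V)}(g\iota)|\le q+3$ unless $g$ falls into the $|S|=2$ cases of Lemma~\ref{transvection} (where it is $2q+2$); and correspondingly one tracks through the conjugacy-class structure — the elements with $2q+2$ fixed $1$-spaces are the semisimple involutions and the $a=b^{\pm1}$, $d\ne 0$ elements, whose images under $\tau$ (being again not transvections, and not in these same small classes, which one checks via $\tau$'s effect on $\Lambda^2 V$) have strictly fewer fixed $1$-spaces, forcing $|\supp(g\tau)|\ge q^3+q^2$. Combining, every non-transvection yields sum $\ge 2q^3+q^2$ as well, so $\delta_{tw}=2q^3+q^2$. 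Finally $\delta_{tw}-\delta_{\rep} = (2q^3+q^2)-2q^3 = q^2$, as claimed. The delicate bookkeeping over the conjugacy classes of $\Sp(4,q)$ — in particular verifying that no non-transvection $g$ has $g\tau$ landing in one of the $2q+2$-fixed-$1$-space classes — is the step I expect to require the most care, and is presumably handled in the paper by a case analysis paralleling Lemma~\ref{transvection} applied to $\Lambda^2 g$.
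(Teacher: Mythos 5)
Your setup, your computation of $\delta_{\rep}=2q^3$, and your treatment of the transvection case all match the paper. The difficulty you run into with non-transvections, however, is largely of your own making, and your proposed escape from it does not work. First, the phantom obstacle: for a non-transvection $g$, Lemma~\ref{transvection} gives $|{\rm Fix}_{{\rm PG}(V)}(g)|\leq 2q+2$, hence $|\supp(g\iota)|\geq m-(2q+2)=q^3+q^2-q-1$, not merely $q^3+1$. Using this full-strength bound on \emph{both} coordinates, the paper disposes of the case where neither $g$ nor $\tau(g)$ is a transvection in one line: the sum is at least $2(m-2q-2)=2q^3+2q^2-2q-2$, which exceeds $2q^3+q^2$ exactly when $q^2>2q+2$, i.e.\ for $q\geq 4$ (a restriction the paper states explicitly). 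No conjugacy-class bookkeeping, and no analysis of $\Lambda^2g$ on the $(2q+2)$-classes, is needed.

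Second, a genuine error in your workaround: the claim that ``for $g$ not a transvection, $g\tau$ is also not a transvection'' is false. Since $\tau$ permutes conjugacy classes and moves the transvection class off itself, bijectivity forces some class of \emph{non}-transvections to be mapped \emph{onto} the transvection class; for $g$ in that class $\tau(g)$ is a transvection, so $|\supp(g\tau)|=q^3$ and your bound $|\supp(g\tau)|\geq q^3+q^2-q-1$ fails. This is why the paper's case split is ``one of $g$ or $\tau(g)$ is a transvection'' versus ``neither is'': in the first case the sum is exactly $(m-q^2-q-1)+(m-q-1)=2q^3+q^2$ (which implicitly uses that the $\tau$-preimage of the transvection class consists of elements with $q+1$ fixed $1$-spaces), and in the second case both terms are bounded below by $m-2q-2$. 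The remaining assertions in your sketch --- that images of the $(2q+2)$-classes have strictly fewer fixed $1$-spaces, ``forcing $|\supp(g\tau)|\geq q^3+q^2$'' --- are both unproved and unnecessary once the case split and the support bound are corrected.
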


\begin{proof}
By Proposition \ref{lower bound}, the minimum distance
$\delta(C(T,\mathcal{I}))$ is equal to the minimum of $|\supp(g)| + |\supp(\tau(g))|$,
for $g\neq 1$ in $T$, where $\supp (g)$ denotes the subset of ${\rm PG}(V)$
consisting of the $1$-spaces which are moved by $g$ (not fixed setwise).
It follows from
Lemmas \ref{transvection} and \ref{l:trans} that if one of $g$ or $\tau(g)$
is a transvection, then $|\supp (g)|+|\supp
(\tau(g))|=(m-q^2-q-1)+(m-q-1)=2q^3+q^2$.
For all other elements $g\neq 1$, it follows from Lemma \ref{transvection}
that $|\supp(g)|+|\supp(\tau(g))|\geq 2(m-2q-2)$, which is strictly greater than
$2q^3+q^2$ for $q\geq 4$. Therefore,
$\delta(C(T,\mathcal{I}))=2q^3+q^2.$

Finally, we have from Proposition \ref{lower bound} and Lemma
\ref{transvection} that the lower bound in Proposition \ref{lower
bound} is the minimum of $2|\supp (g)|$ for nontrivial $g\in T$, and
that this is attained when $g$ is a transvection and is $2(m-q^2-q-1)=2q^3$.
The difference between $\delta(T,\mathcal{I})$ and this lower
bound is $q^2$.
\end{proof}

\subsection{Proof of Theorem \ref{main thm}.}
By considering $\delta_{tw} := \delta(C(T,\mathcal{I}))$ and
$\delta_{\rep} :=\min_{\tau\in T}\{\delta(\Rep_r(C(G,\tau))\}$,
Theorem\ref{main thm} follows from Propositions \ref{T affine} and
\ref{symplectic}.

\begin{center}
{\bf Acknowledgements}
\end{center}
The first author would like to thank the School of Mathematics and
Statistics of the University of Western Australia for their
hospitality during the preparation of this paper. She also would
like to express her deep gratitude to the second and third authors for numerous mathematical discussions.

The second author would like to acknowledge the support of a grant from the University of Western Australia associated with
the Australian Research Council Federation Fellowship FF0776186 of the third author.

\end{document}